\tikzset{stretch/.initial=1}
\newcommand\drawloop[4][]%
\newtheorem{theorem}{Theorem}[section]
\newtheorem{lemma}[theorem]{Lemma}
\newtheorem{proposition}[theorem]{Proposition}
\newtheorem{corollary}[theorem]{Corollary}
\theoremstyle{definition}
\newtheorem{definition}[theorem]{Definition}
\newtheorem{example}[theorem]{Example}
\theoremstyle{remark}
\newtheorem{remark}[theorem]{Remark}
\renewcommand{\epsilon}{\varepsilon}
\DeclareMathOperator{\Orb}{Orb}
\title{Expansivity and unique shadowing}
\author[Good, Mac\'{\i}as, Meddaugh, Mitchell and Thomas]{Chris Good, Sergio Mac\'{\i}as, Jonathan Meddaugh, Joel Mitchell and Joe Thomas}
\date{February 2020}
\begin{document}

\hypersetup{pageanchor=false} %removes some warnings

\begin{abstract}
Let $f\colon X\to X$ be a continuous function on a compact metric space. We show that shadowing is equivalent to backwards shadowing and two-sided shadowing when the map $f$ is onto. Using this we go on to show that, for expansive surjective maps the properties shadowing, two-sided shadowing, s-limit shadowing and two-sided s-limit shadowing are equivalent. We show that $f$ is positively expansive and has shadowing if and only if it has unique shadowing (i.e.\ each pseudo-orbit is shadowed by a unique point), extending a result implicit in Walter's proof that positively expansive maps with shadowing are topologically stable. We use the aforementioned result on two-sided shadowing to find an equivalent characterisation of shadowing and expansivity and extend these results to the notion of $n$-expansivity due to Morales.
\end{abstract}

\maketitle

%NOTES: InverseShadowing.tex deals only with positive T_0 inverse shadowing, whereas InverseShadowingBoth.tex deals with T_0, T_c and T_h inverse shadowing and the positive versions. Don't include both files.

\hypersetup{pageanchor=true} %prevents the other one from breaking things

%Inverse shadowing: Show: reformulation, equivalence to finite shadowing, and not exapansive.

%Shadowing: implies shadowing backwards.

%S-limit shadowing: First condition is obsolete.

%2nd Weak shadowing (Orbit containment shadowing): Every compact Hausdorff space has it.

\section{Introduction}
Let $f\colon X\to X$ be a continuous function on a compact metric space $X$. A $\delta$\emph{-pseudo-orbit} is a sequence $(x_i)_{i\in \mathbb{N}_0}$ such that $d(f(x_{i}),x_{i+1}) <\delta$. Pseudo-orbits are of importance when calculating an orbit numerically, as rounding errors mean a computed orbit will be a pseudo-orbit. The sequence $(y_i)$ from $X$ is said to $\epsilon$-shadow the sequence $(x_i)$ provided $d(y_i,x_i)<\epsilon$ for all $i$. We then say that the system has \emph{shadowing}, or \emph{the pseudo-orbit tracing property}, if pseudo-orbits are shadowed by true orbits (see below for precise definitions). Motivating this paper is Walters \cite{WaltersP} result that if $h$ is an expansive homeomorphism with shadowing, then for every $\epsilon>0$ there is a $\delta>0$ such that every $\delta$-pseudo-orbit is $\epsilon$-shadowed by a unique point from $X$. We show that the converse is true; a system is shadowing and expansive if and only if it has unique shadowing. We go on to obtain results of a similar flavour using the notion of $n$-expansivity due to Morales \cite{morales}.

Shadowing is important when modelling a system numerically (for example see \cite{Corless, Pearson}). However, it is also important theoretically. For example, Bowen \cite{bowen-markov-partitions} used shadowing implicitly as a key step in his proof that the nonwandering set of an Axiom A diffeomorphism is a factor of a shift of finite type. Since then it has been studied extensively, in the setting of numerical analysis \cite{Corless,CorlessPilyugin,Pearson}, as a key factor in stability theory \cite{Pilyugin, robinson-stability,WaltersP}, in understanding the structure of $\omega$-limit sets and Julia sets, 
\cite{barwell-davies-good, BarwellGoodOprochaRaines, barwell-raines-meddaugh, barwell-raines, Bowen, GoodMeddaughMitchell,
MeddaughRaines}, and as a property in and of itself \cite{Coven, fernandez-good, GoodMeddaugh2018, LeeSakai, Nusse, Pennings, Pilyugin,Sakai2003}.

Many other notions of shadowing have been studied including, for example, ergodic, thick and Ramsey shadowing \cite{brian, brian-oprocha, bmr, Dastjerdi, Fakhari, oprocha}, limit shadowing \cite{BarwellGoodOprocha, good-oprocha-puljiz, Pilugin2007}, s-limit shadowing \cite{BarwellGoodOprocha, good-oprocha-puljiz, LeeSakai}, orbital shadowing \cite{GoodMeddaugh2016, Mitchell, PiluginRodSakai2002, Pilugin2007}, and inverse shadowing \cite{CorlessPilyugin, GoodMitchellThomas2, Lee}. In this paper we focus on shadowing, s-limit shadowing, h-shadowing and limit shadowing.

In Section \ref{section-shadowing}, we observe (Theorem \ref{theorem-shadowing}) that if $f$ is surjective then it has shadowing if and only if for any $\epsilon >0$ there exists $\delta >0$ such that every backwards $\delta$-pseudo orbit is $\epsilon$-shadowed by some backwards orbit of a point: thus shadowing is equivalent to \emph{backwards shadowing}. We additionally show that it is equivalent to \emph{two-sided shadowing} (i.e.\ two-sided pseudo-orbits are shadowed by a two-sided trajectory of a point). We then strengthen a result in \cite{BarwellGoodOprochaRaines} (Corollary \ref{CorollaryEquivShad_and_sLim}), by demonstrating that for expansive maps, the properties shadowing, two-sided shadowing, s-limit shadowing and two-sided s-limit shadowing are equivalent. In Section \ref{section-unique}, we turn our attention to the notion of $n$-expansivity due to Morales \cite{morales}. We show (Theorem \ref{thmnShadIFF}) that pseudo-orbits are shadowed by at most $n$ points if and only if $f$ has shadowing and is $n$-expansive. We then construct an example of a positively $n$-expansive system with shadowing which is not positively $(n-1)$-expansive. We close by examining the consequences of uniqueness in three other shadowing properties, namely s-limit shadowing, limit shadowing and h-shadowing.
\section{Preliminaries}
This section serves to outline the preliminary background definitions and notions for the remainder of this paper and are standard across the literature. Throughout, we will assume that a discrete \emph{dynamical system} is a pair $(X,f)$ consisting of a compact metric space $X$ and a continuous map $f\colon X \to X$. Note that we do not assume, in general, that the map $f$ is onto. However, since surjective dynamical systems are usually the more interesting from a dynamics viewpoint, we ensure that every example we construct in this paper is surjective (unless it is the property of surjectivity itself which is under examination). We say that the \emph{orbit} of $x$ under $f$ is the set of points $\{x, f(x), f^2(x), \ldots\}$; we denote this set by $\Orb_f(x)$. A (finite or infinite) sequence $(x_i ) _{0 \le i\le n}$ for some $n\in \mathbb{N}\cup\{\infty\}$ is said to be a \emph{$\delta$-pseudo-orbit} for some $\delta>0$ if $d(f(x_i),x_{i+1})<\delta$ for each $i \le n$. The infinite sequence $(x_i ) _{i \in \mathbb{N}_0}$ is an \emph{asymptotic pseudo-orbit} provided that $\lim_{i \rightarrow \infty} d(f^i(x_i), x_{i+1}) =0$ and we say that $( x_i ) _{i \in \mathbb{N}_0}$ is an \emph{asymptotic $\delta$-pseudo-orbit} if it is both a $\delta$-pseudo-orbit and an asymptotic pseudo-orbit.
The point $z \in X$ is said to \emph{$\epsilon$-shadow} $(x_i ) _{0 \le i\le n}$ for some $\epsilon>0$ if $d(x_i,f^i(z))<\epsilon$ for each $i \le n$. It \emph{asymptotically shadows} the sequence $(x_i)_{i\in\mathbb{N}_0}$ if $\lim_{i \rightarrow \infty} d(x_i, f^i(z)) =0$ and \emph{asymptotically $\epsilon$-shadows} the sequence if it both $\epsilon$-shadows and asymptotically shadows it.

The classical notion of shadowing states that $(X, f)$ has \emph{shadowing} provided for any $\epsilon >0$ there exists $\delta >0$ such that every (infinite) $\delta$-pseudo-orbit is $\epsilon$-shadowed. The system has \emph{limit shadowing},  a property first introduced in \cite{EirolarNevanlinnaPilyugin} with reference to hyperbolic sets, if every asymptotic pseudo-orbit is asymptotically shadowed. The notion of limit shadowing was extended in \cite{LeeSakai} to a property the authors called s-limit shadowing to accommodate the fact that many systems exhibit limit shadowing but not shadowing \cite{Kulczycki,Pilyugin}. The system $(X,f)$ has \emph{s-limit shadowing} if, in addition to having shadowing\footnote{We note that postulating shadowing as part of the definition of s-limit shadowing is actually unnecessary when the phase space is compact (see by \cite[Theorem 11.0.1]{GoodMitchellThomas}).}, for any $\epsilon>0$ there exists $\delta>0$ such that for any asymptotic $\delta$-pseudo orbit $( x_i )_{i \in \mathbb{N}_0}$ there exists $z \in X$ which asymptotically $\epsilon$-shadows $( x_i )_{i\in\mathbb{N}_0}$. Finally, the system $(X,f)$ has \emph{h-shadowing}, or \emph{shadowing with exact hit}, if for any $\epsilon>0$ there exists $\delta>0$ such that for any finite $\delta$-pseudo orbit $(x_0, x_1, \ldots x_m)$ there exists $z \in X$ which $\epsilon$-shadows it and for which $f^m(z)=x_m$.

We remark that h-shadowing was introduced in \cite{BarwellGoodOprochaRaines} and was motivated by the fact that an important class of shift systems, called shifts of finite type, which are fundamental in the study of shadowing (see \cite{GoodMeddaugh2018}) exhibit this stronger form of shadowing and that it coincide with the usual form for shift systems but is distinct in general (see \cite[Example 6.4]{BarwellGoodOprocha}). Moreover, it is known from results in \cite{BarwellGoodOprocha} that $h$-shadowing implies s-limit shadowing which further implies implies limit shadowing.

\section{Two-sided shadowing}\label{section-shadowing}

We start with the following simple observation, which we nevertheless believe to be new for functions in general. The classical notion of shadowing states that $(X, f)$ has shadowing provided for any $\epsilon >0$ there exists $\delta >0$ such that every $\delta$-pseudo-orbit is $\epsilon$-shadowed. It is a standard result in the theory of shadowing \cite{Pilyugin} that a compact dynamical system $(X,f)$ has shadowing if and only if for any $\epsilon>0$ there is a $\delta>0$ such that every finite $\delta$-pseudo orbit $(x_0,\ldots,x_n)$ is $\epsilon$-shadowed by some $x\in X$. It is shown here that in a compact space, one obtains an equivalent notion of shadowing in terms of backwards and two-sided (pseudo-)orbits.

\begin{definition}\label{def: twosided and backward pseudo orbits}
Suppose that $(X,f)$ is a dynamical system.
\begin{enumerate}
\item A \emph{backwards orbit} of the point $x\in X$ is a sequence $(x_{i})_{i\leq0}\subseteq X$ for which $f(x_{i})=x_{i+1}$ for all $i\leq -1$ and $x_0=x$.
\item A \emph{two-sided orbit} of the point $x\in X$ is a sequence 
$(x_i)_{i\in\mathbb Z}\subseteq X$ for which $f(x_i)=x_{i+1}$ for all $i\in\mathbb Z$ and $x_0=x$.
\item The sequence $(x_{i})_{i\leq0}\subseteq X$ is a \emph{backwards $\delta$-pseudo-orbit} if $d(f(x_{i}),x_{i+1})<\delta$ for each $i\leq-1$.
\item The sequence $(x_{i})_{i\in\mathbb Z}\subseteq X$ is a \emph{two-sided $\delta$-pseudo-orbit} if $d(f(x_{i}),x_{i+1})<\delta$ for each $i\in\mathbb Z$.
\item $(X,f)$ is said to have the \textit{backwards shadowing property} if for any $\varepsilon>0$, there exists $\delta>0$ for which every backwards $\delta$-pseudo-orbit in $X$ is $\varepsilon$-shadowed by some backwards orbit of a point in $X$.
\item $(X,f)$ is said to have the \textit{two-sided shadowing property} if for any $\varepsilon>0$, there exists $\delta>0$ for which every two-sided $\delta$-pseudo-orbit in $X$ is $\varepsilon$-shadowed by some two-sided orbit of a point in $X$.
\end{enumerate}
\end{definition}

Obviously if $f$ is not a homeomorphism, backwards and two-sided orbits need not be unique. 

\begin{theorem}\label{theorem-shadowing}
Let $(X,f)$ be a dynamical system with $X$ compact. Then, of the following, (1) implies (2) which implies (3). Furthermore, if $f$ is onto then (3) implies (1).
\begin{enumerate}
\item $f$ has shadowing;
\item $f$ has two-sided shadowing;
\item $f$ has backwards shadowing.
\end{enumerate}   
\end{theorem}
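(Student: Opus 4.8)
The plan is to prove the three implications separately, with the compactness of $X$ (equivalently, the reduction of shadowing to finite pseudo-orbits) doing the heavy lifting throughout.

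For $(1)\Rightarrow(2)$, suppose $f$ has shadowing and fix $\varepsilon>0$; choose $\delta>0$ witnessing $\varepsilon$-shadowing of (infinite, forward) $\delta$-pseudo-orbits. Given a two-sided $\delta$-pseudo-orbit $(x_i)_{i\in\mathbb Z}$, I would for each $n\in\mathbb N$ apply shadowing to the forward $\delta$-pseudo-orbit $(x_{i-n})_{i\geq 0}$ to obtain a point $z_n$ with $d(f^{i}(z_n),x_{i-n})<\varepsilon$ for all $i\geq 0$; equivalently, writing $w_n=f^{n}(z_n)$, the forward orbit of $z_n$ of length... more usefully, $d(f^{i+n}(z_n),x_i)<\varepsilon$ for all $i\geq -n$. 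Now I want to extract a two-sided orbit. Pass to a subsequence along which $f^{n}(z_n)$ converges to some point $y_0$; more carefully, using a diagonal argument, pass to a subsequence along which $f^{n-k}(z_n)$ converges for every fixed $k\geq 0$, say to $y_{-k}$, and set $y_j=f^{j}(z_n)\to$ (limit) for $j\geq 0$ too — i.e. diagonalize so that $f^{n+j}(z_n)\to y_j$ for every $j\in\mathbb Z$. Continuity of $f$ gives $f(y_j)=y_{j+1}$ for all $j\in\mathbb Z$, so $(y_j)$ is a two-sided orbit of $y_0$, and since $d(f^{n+i}(z_n),x_i)<\varepsilon$ for all $i\geq -n$ we get $d(y_i,x_i)\leq\varepsilon$ in the limit. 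A standard $\varepsilon/2$ trick (start from $\delta$ for $\varepsilon/2$) upgrades the non-strict inequality to strict, giving two-sided shadowing.

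For $(2)\Rightarrow(3)$, this is immediate: given a backwards $\delta$-pseudo-orbit $(x_i)_{i\leq 0}$, extend it to a two-sided $\delta$-pseudo-orbit by setting $x_{i+1}=f(x_i)$ for $i\geq 0$ (the new constraints are satisfied with $d=0<\delta$), apply two-sided shadowing, and restrict the resulting two-sided orbit to indices $i\leq 0$ to obtain a backwards orbit that $\varepsilon$-shadows $(x_i)_{i\leq 0}$.

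For $(3)\Rightarrow(1)$ when $f$ is onto, fix $\varepsilon>0$, take $\delta>0$ from backwards shadowing for $\varepsilon$, and let $(x_i)_{i\in\mathbb N_0}$ be a forward $\delta$-pseudo-orbit. Using surjectivity of $f$ (hence of every $f^n$), pick for each $n$ a point realizing $x_0$ as the $n$-th image, i.e. build a backwards $\delta$-pseudo-orbit $(y_i)_{i\leq 0}$ with $y_0$ arranged so that... the cleanest route: reverse-index the given forward pseudo-orbit into a backwards $\delta$-pseudo-orbit. Precisely, since $f$ is onto we may choose $y_0$ with $f(y_0)=x_1$? That loses track of $x_0$. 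Instead: for each fixed $N$, surjectivity lets me choose $a_0^{N}$ with $f^{N}(a_0^{N})=x_0$, and then set $y_{-i}^N = x_{N-i}$ for $0\leq i\leq N$ and $y_{-i}^N=f^{N-i}(a_0^{N})$ for $i>N$; this is a backwards $\delta$-pseudo-orbit (the only new gap, at index $-N$, is $d(f(y_{-N-1}),y_{-N})=d(x_0,x_0)=0$, and all other gaps come from the original forward pseudo-orbit). Backwards shadowing yields a backwards orbit $(w_i)_{i\leq 0}$ with $d(w_{-i},y_{-i}^N)<\varepsilon$; in particular $d(f^{j}(w_{-N}),x_j)<\varepsilon$ for $0\leq j\leq N$, so $z_N:=w_{-N}$ is a point whose orbit $\varepsilon$-shadows $(x_0,\dots,x_N)$. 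Since this holds for every $N$, $f$ has finite-pseudo-orbit shadowing, which by the standard compactness result (\cite{Pilyugin}) is equivalent to shadowing.

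The main obstacle is the diagonalization/compactness argument in $(1)\Rightarrow(2)$: making sure the limiting two-sided sequence is genuinely a two-sided \emph{orbit} (i.e.\ $f(y_i)=y_{i+1}$ on the nose, using continuity of $f$ and compactness of $X$ to extract convergent subsequences simultaneously for all coordinates via a diagonal argument) and that the shadowing inequalities survive the limit with room to spare. The surjectivity bookkeeping in $(3)\Rightarrow(1)$ is the other place to be careful, since one must splice the forward data into a backwards pseudo-orbit without creating a bad gap, and this is exactly where onto-ness of $f$ is used.
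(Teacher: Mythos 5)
Your argument follows the paper's proof essentially step for step: for $(1)\Rightarrow(2)$ you $\epsilon/2$-shadow the truncated forward pseudo-orbits $(x_{i-n})_{i\geq 0}$ and extract a two-sided orbit by a diagonal/nested-subsequence compactness argument with continuity giving $f(y_j)=y_{j+1}$; $(2)\Rightarrow(3)$ is the obvious forward extension along the true orbit; and $(3)\Rightarrow(1)$ uses surjectivity to turn finite forward $\delta$-pseudo-orbits into backwards ones and then the standard finite-pseudo-orbit characterisation of shadowing on compact spaces, exactly as in the paper. The only flaw is a notational slip in $(3)\Rightarrow(1)$: as written, $y_{-i}^{N}=f^{N-i}(a_0^{N})$ for $i>N$ involves negative iterates and in any case produces only finitely many terms, whereas Definition 3.1 requires a backwards pseudo-orbit indexed by all $i\leq 0$; the immediate fix is to prepend an infinite backwards orbit of $x_0$ (choose preimages recursively, which surjectivity permits), after which your computation $d\bigl(f^{j}(w_{-N}),x_j\bigr)<\epsilon$ for $0\leq j\leq N$ goes through unchanged.
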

%\emph{JSM: Surjectivity is necessary here to get that (3) implies (1). This is because (3) does not imply finite pseudo-orbits are shadowed since if the map is not surjective there might not be a backwards pseudo-orbit through a certain point. See Example \ref{ExampleNotOnto}.}

\begin{proof} $(1) \implies (2)$: Suppose that $(X,f)$ has shadowing. Let $\epsilon>0$ and choose $\delta>0$ such that every $\delta$-pseudo-orbit is $\epsilon/2$-shadowed. Suppose that $(x_{n})_{n\in\mathbb Z}$ is a two-sided $\delta$-pseudo-orbit. 
For each $n>0$, let $y_{-n}$ be a point which $\epsilon/2$-shadows the $\delta$-pseudo-orbit $(x_{-n}, x_{-n+1}, x_{-n+2},\ldots)$. There exists a point $z_0\in X$ and an infinite subset $N_0$ of $\mathbb{N}_0$ such that $f^n(y_{-n})\to z_0$ as $n\to \infty$ and $n\in N_0$. Clearly the forward orbit of $z_0$ $\epsilon$-shadows $(x_0, x_1, x_2,\ldots)$. Given $z_{-k}$ and an infinite subset $N_k$ of $\mathbb{N}_0$ such that $f^{n-k}(y_{-n}) \to z_{-k}$ as $n\to \infty$ and $n\in N_k\cap\{k+1,k+2,\ldots\}$, we can find a point $z_{-k-1}$ and an infinite $N_{k+1}\subseteq N_k$ such that $f^{n-k-1}(y_{-n}) \to z_{-k-1}$ as $n\to \infty$ and $n\in N_{k+1}\cap\{k+2,k+3,\ldots\}$. Note that $d( x_{-k},z_{-k})<\epsilon$ and that, by continuity, $f(z_{-k-1})=z_{-k}$ for all $k\ge0$. Hence $z_0$ has a two-sided orbit that $\epsilon$-shadows $(x_{n})_{n\in\mathbb{Z}}$.

It is clear that (2) implies (3). Finally (3) implies (1) because, given that every point has a pre-image, (3) implies that finite pseudo-orbits are shadowed, which is equivalent to shadowing in compact metric spaces.
\end{proof}

 One can also extend the notion of s-limit shadowing to the two-sided and backward varieties. For this, one requires the notions of two-sided asymptotic pseudo-orbits and backward asymptotic pseudo-orbits. These are defined analogously to the normal (forward) asymptotic pseudo-orbits but in the spirit of Definition \ref{def: twosided and backward pseudo orbits}. 
 
 \begin{definition}
 \label{def: twosided+backward slim shadowing}
 Suppose that $(X,f)$ is a dynamical system.
\begin{enumerate}
\item A \emph{backwards asymptotic $\delta$-pseudo-orbit} is a backwards $\delta$-pseudo orbit $(x_i)_{i\leq 0}$ which in addition satisfies $d(f(x_i),x_{i+1})\to 0$ as $i\to\infty$.
\item A \emph{two-sided asymptotic $\delta$-pseudo-orbit} is a two-sided $\delta$-pseudo orbit $(x_i)_{i\in\mathbb{Z}}$ which in addition satisfies $d(f(x_i),x_{i+1})\to 0$ as $i\to\pm\infty$.
\item $(X,f)$ is said to have the \textit{backwards s-limit shadowing property} if it has the backwards shadowing property and for any $\varepsilon>0$ there exists a $\delta>0$ such that every backwards asymptotic $\delta$-pseudo-orbit in $X$ is asymptotically $\varepsilon$-shadowed by some backwards orbit in $X$.
\item $(X,f)$ is said to have the \textit{two-sided s-limit shadowing property} if it has the two-sided shadowing property and for any $\varepsilon>0$ there exists a $\delta>0$ such that every two-sided asymptotic $\delta$-pseudo-orbit in $X$ is asymptotically $\varepsilon$-shadowed by some two-sided orbit in $X$.
\end{enumerate}
\end{definition}
We then obtain a connection between the different varieties of s-limit shadowing.

\begin{proposition}\label{prop_Twosided_slim_Implies_Other_slim}
If $f$ has two-sided s-limit shadowing then it has backward s-limit shadowing. If, in addition, $f$ is a surjection then $f$ has s-limit shadowing.
\end{proposition}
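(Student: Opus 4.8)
The plan is to lean on Theorem~\ref{theorem-shadowing} for the ``shadowing'' halves of both claims and to dispatch the asymptotic halves by a uniform trick: extend a one-sided (asymptotic) pseudo-orbit to a two-sided one whose newly appended transition errors all vanish, apply two-sided s-limit shadowing, and restrict the resulting two-sided orbit back to the relevant half-line.

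For the first assertion, observe that two-sided s-limit shadowing includes two-sided shadowing by definition, so Theorem~\ref{theorem-shadowing} immediately yields backwards shadowing; only the asymptotic clause of backward s-limit shadowing remains. Given $\varepsilon>0$, let $\delta>0$ be the constant witnessing two-sided s-limit shadowing for $\varepsilon$. Starting from a backwards asymptotic $\delta$-pseudo-orbit $(x_i)_{i\le 0}$, I would set $x_i:=f^i(x_0)$ for $i\ge 1$; since $d(f(x_i),x_{i+1})=0$ for all $i\ge 0$, the sequence $(x_i)_{i\in\mathbb Z}$ is a two-sided asymptotic $\delta$-pseudo-orbit (the errors tend to $0$ as $i\to-\infty$ by hypothesis and are identically $0$ as $i\to+\infty$). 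Applying two-sided s-limit shadowing produces a two-sided orbit $(z_i)_{i\in\mathbb Z}$ of some point $z=z_0$ that asymptotically $\varepsilon$-shadows $(x_i)_{i\in\mathbb Z}$; restricting to $i\le 0$ gives a backwards orbit of $z$ that $\varepsilon$-shadows $(x_i)_{i\le 0}$ with error tending to $0$ as $i\to-\infty$, which is exactly the asymptotic clause.

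For the second assertion, surjectivity lets Theorem~\ref{theorem-shadowing} promote two-sided shadowing to shadowing, so again only the asymptotic clause of s-limit shadowing is at issue. Given $\varepsilon>0$ and the same $\delta>0$ as above, and a forward asymptotic $\delta$-pseudo-orbit $(x_i)_{i\ge 0}$, I would use surjectivity of $f$ to recursively choose preimages $x_{-i-1}\in f^{-1}(x_{-i})$ for $i\ge 0$. Then $d(f(x_i),x_{i+1})=0$ for all $i<0$, so $(x_i)_{i\in\mathbb Z}$ is a two-sided asymptotic $\delta$-pseudo-orbit; two-sided s-limit shadowing gives a two-sided orbit $(z_i)_{i\in\mathbb Z}$ asymptotically $\varepsilon$-shadowing it, and since $z_i=f^i(z_0)$ for $i\ge 0$, restricting to $i\ge 0$ shows $z_0$ asymptotically $\varepsilon$-shadows $(x_i)_{i\ge 0}$.

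The only things to verify are routine: that the forward and backward extensions genuinely are two-sided asymptotic $\delta$-pseudo-orbits (immediate, because the appended transition errors are $0$) and that the restriction of a two-sided asymptotically $\varepsilon$-shadowing orbit to a half-line is a one-sided asymptotically $\varepsilon$-shadowing orbit of the appropriate flavour in the sense of Definition~\ref{def: twosided+backward slim shadowing}. I do not expect a real obstacle here; the only mild care needed is bookkeeping with the two-sided asymptotic notions and noting that a single $\delta$, obtained once from two-sided s-limit shadowing, suffices in both conclusions.
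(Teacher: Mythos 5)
Your proposal is correct and follows essentially the same route as the paper: extend the one-sided asymptotic pseudo-orbit to a two-sided one (forward by iterating $f$, backward by choosing preimages using surjectivity), apply two-sided s-limit shadowing, restrict the resulting two-sided orbit, and obtain the plain (backwards) shadowing clauses from Theorem~\ref{theorem-shadowing}. No gaps.
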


\begin{proof}
Let $\epsilon>0$ be given and let $\delta>0$ correspond to this for two-sided s-limit shadowing.

Let $(x_i)_{i\leq 0}$ be a backward asymptotic $\delta$-pseudo-orbit. Extend this into a two-sided asymptotic $\delta$-pseudo-orbit by letting $x_i=f^i(x_0)$ for all $i >0$. By two-sided s-limit shadowing there exists $z \in X$ which asymptotically $\epsilon$-shadows $(x_i)_{i \in \mathbb{Z}}$. In particular, $z$ backwards asymptotically $\epsilon$-shadows $(x_i)_{i\leq 0}$. This part of the result now follows by the fact that two-sided shadowing implies backward shadowing (see the proof of Theorem \ref{theorem-shadowing}).

Now let $(x_i)_{i \geq 0}$ be an asymptotic $\delta$-pseudo-orbit and suppose $f$ is onto: for each $i <0$ let $x_i$ be such that $f(x_i)=x_{i+1}$. By two-sided s-limit shadowing there exists $z \in X$ which asymptotically $\epsilon$-shadows $(x_i)_{i \in \mathbb{Z}}$. In particular, $z$ asymptotically $\epsilon$-shadows $(x_i)_{i\leq 0}$. It remains to note that $f$ has shadowing by Theorem \ref{theorem-shadowing}.
\end{proof}
The following example shows the necessity of surjectivity in the previous result. Indeed, one can exhibit a non-surjective system with two-sided s-limit shadowing but not s-limit shadowing. 

\begin{example}\label{ExampleNotOnto}
Let $X= \{ -1, -\nicefrac{1}{2}, 0, \nicefrac{1}{2^n} \mid n \in \mathbb{N}_0 \}$ with the induced metric from the real line. Let
\[f(x)=\left\{\begin{array}{lll}
x+\nicefrac{1}{2}& \text{if} & x \in \{-1,\nicefrac{1}{2}\},
\\
x & \text{if} & x \in \{0, 1\},
\\
\nicefrac{1}{2^{n-1}} & \text{if} & x= \nicefrac{1}{2^n} \text{ for } n\geq 1.
\end{array}\right.\]
For any $\delta$, we can construct a $\delta$-pseudo-orbit starting from $-1$ and ending with a sequence of $1$s. This cannot be, for example, $\nicefrac{1}{3}$-shadowed, thus the system does not have s-limit shadowing. However, for $\delta < \nicefrac{1}{3}$ every backward asymptotic $\delta$-pseudo orbit lies in $[0,1]$. Similarly, every two-sided asymptotic $\delta$-pseudo-orbit lies in $[0,1]$, and it is clear that the subsystem $X \cap [0,1]$ has backward and two-sided s-limit shadowing.
\end{example}
Recall that a system $(X,f)$ is \textit{c-expansive} if there exists some $\eta>0$ such that for any $x,y\in X$ and two-sided orbits $(x_i)_{i\in\mathbb{Z}}$ and $(y_i)_{i\in\mathbb{Z}}$ in $X$ with $x_0=x$, $y_0=y$ and $d(x_i,y_i)<\eta$ for all $i\in\mathbb{Z}$ one has $x=y$. It is often seen that systems with expansivity properties guarantee that certain characteristic properties of shadowing varieties hold. For example, in \cite{BarwellGoodOprochaRaines} the first named author \emph{et al} show that an expansive map has shadowing if and only if it has s-limit shadowing. The next result extends this further by providing an equivalence between s-limit shadowing and two-sided s-limit shadowing.

\begin{theorem}\label{thmcExpansiveSHadIFFsLimSHad}
Let $(X,f)$ be a dynamical system. If $f$ is c-expansive then $f$ has two-sided shadowing if and only if $f$ has two-sided s-limit shadowing.
\end{theorem}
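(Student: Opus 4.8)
The implication ``two-sided $s$-limit shadowing $\Rightarrow$ two-sided shadowing'' is immediate, since by definition the former includes the latter; the content is the converse, which I would prove as follows. Fix a c-expansivity constant $\eta>0$ for $f$, so that any two two-sided orbits that are everywhere within $\eta$ of one another must coincide. Given $\epsilon>0$, set $\epsilon^*=\min\{\epsilon,\eta/2\}$ and let $\delta>0$ witness two-sided shadowing for the error $\epsilon^*$. The plan is to show that this same $\delta$ witnesses two-sided $s$-limit shadowing with tolerance $\epsilon$; since $f$ already has two-sided shadowing by hypothesis, this will finish the proof.

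So let $(x_i)_{i\in\mathbb Z}$ be a two-sided asymptotic $\delta$-pseudo-orbit. It is in particular a two-sided $\delta$-pseudo-orbit, so two-sided shadowing supplies a two-sided orbit $(y_i)_{i\in\mathbb Z}$ with $d(x_i,y_i)<\epsilon^*$ for every $i$. The key point is to show that such a $(y_i)$ is automatically an asymptotic shadow, i.e.\ $d(x_i,y_i)\to 0$ as $i\to\pm\infty$; together with the uniform bound $d(x_i,y_i)<\epsilon^*\le\epsilon$ this says exactly that $(x_i)$ is asymptotically $\epsilon$-shadowed by the two-sided orbit of $y_0$.

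To establish the key point I would argue by contradiction and a compactness argument. Suppose instead that, say, $d(x_{i_n},y_{i_n})\ge\epsilon'$ for some $\epsilon'>0$ and some sequence $i_n\to+\infty$ (the case $i_n\to-\infty$ is symmetric). By compactness of $X$ and a diagonal argument over the coordinates, pass to a subsequence of $(i_n)$ along which $x_{j+i_n}\to u_j$ and $y_{j+i_n}\to v_j$ as $n\to\infty$, for every $j\in\mathbb Z$. Then $(v_j)_{j\in\mathbb Z}$ is a two-sided orbit by continuity of $f$, and $(u_j)_{j\in\mathbb Z}$ is a two-sided orbit because $d(f(u_j),u_{j+1})=\lim_n d(f(x_{j+i_n}),x_{j+i_n+1})=0$, the last equality holding since $j+i_n\to+\infty$ and $(x_i)$ is an asymptotic pseudo-orbit. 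Moreover $d(u_j,v_j)=\lim_n d(x_{j+i_n},y_{j+i_n})\le\epsilon^*<\eta$ for all $j$, so c-expansivity forces $u_j=v_j$ for every $j$; but $d(u_0,v_0)=\lim_n d(x_{i_n},y_{i_n})\ge\epsilon'>0$, a contradiction.

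The one point requiring care is this limiting step, and specifically the verification that the translated pseudo-orbits $(x_{j+i_n})_{j}$ converge, along a subsequence, to a genuine two-sided \emph{orbit}: this is exactly where the asymptotic hypothesis is essential, since one needs the defects $d(f(x_i),x_{i+1})$ to vanish as $i\to+\infty$ (and as $i\to-\infty$ in the symmetric case) so that the limit sequence has vanishing defect. Everything else is routine; in particular no surjectivity of $f$ is needed, and the only feature of c-expansivity used — beyond the harmless choice $\epsilon^*\le\eta/2$ — is its defining property, applied to the limit orbits $(u_j)$ and $(v_j)$.
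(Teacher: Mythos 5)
Your proof is correct and follows essentially the same route as the paper: shadow the two-sided asymptotic $\delta$-pseudo-orbit by a two-sided orbit at a tolerance below half the c-expansivity constant, then rule out failure of $d(x_i,y_i)\to 0$ by a compactness extraction yielding two full orbits that stay within $\eta$ of each other yet disagree at coordinate $0$, contradicting c-expansivity. The only (cosmetic) difference is that the paper invokes the proof of Theorem 3.7 of Barwell--Good--Oprocha--Raines for the $i\to+\infty$ tail and handles the $i\to-\infty$ tail by an iterated subsequence construction, whereas you treat both tails symmetrically with a single diagonal argument.
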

 \begin{proof}
 If $f$ has two-sided s-limit shadowing then it has two-sided shadowing by definition. Therefore, suppose that $f$ has two-sided shadowing. Let $\eta>0$ be the c-expansivity constant for $f$ and take $\epsilon>0$ with $\epsilon < \nicefrac{\eta}{2}$: let $\delta>0$ correspond to this $\epsilon$ in the definition of two-sided shadowing (without loss of generality we assume $\delta < \nicefrac{\epsilon}{2}$). Let $( x_i ) _{i \in \mathbb{Z}}$ be a two-sided asymptotic $\delta$-pseudo-orbit. By two-sided shadowing, there exists a full orbit $( z_i ) _{ i \in \mathbb{Z}}$ such that $d(x_i, z_i) < \epsilon$ for all $i \in \mathbb{Z}$. The proof of Theorem 3.7 in \cite{BarwellGoodOprochaRaines} shows that under these conditions, $d(z_i, x_i) \to 0$ as $i \to \infty$ and thus it suffices to show that $d(z_i, x_i) \to 0$ as $i \to -\infty$. This can be done by a similar argument to that of \cite{BarwellGoodOprochaRaines}.
 
 Suppose that $d(z_i, x_i)$ does not converge to $0$ as $i \to -\infty$. Then by compactness of $X$ there exists $a_0,b_0 \in X$ and an infinite set of negative integers, $N_0$, such that
 \begin{enumerate}[label=\roman*).]
\item $\lim_{i \to -\infty, i \in N_0} x_i = a_0$,
\item $\lim_{i \to -\infty, i \in N_0} z_i = b_0$;
\item $d(a_0, b_0)=r >0$.
\end{enumerate} 
Note that by the fact that $( z_i ) _{i \in \mathbb{Z}}$ $\epsilon$-shadows $( x_i ) _{i \in \mathbb{Z}}$, it follows that $r=d(a_0, b_0) \leq \epsilon$. 
 By continuity, for any $k \in \mathbb{N}$, $\lim_{i \to -\infty, i \in N_0} z_{i+k} = f^k(b_0)=:b_k$. Furthermore, since $( x_i )_{i \in \mathbb{Z}}$ is a two-sided asymptotic pseudo-orbit it is in particular a backward asymptotic pseudo-orbit when restricted to $i\leq 0$. Thus, by continuity for any $k \in \mathbb{N}$, $\lim_{i \to -\infty, i \in N_0} x_{i+k} = f^k(a_0)=:a_k$. By shadowing, $d(a_k, b_k) \leq \epsilon$ for all $k \in \mathbb{N}$.
 
 By the compactness of $X$, there exist points $a_{-1}$ and $b_{-1}$ and an infinite subset $N_{-1} \subseteq N_0$ such that \begin{enumerate}[label=\roman*).]
\item $\lim_{i \to -\infty, i \in N_1} x_{i-1} = a_{-1}$,
\item $\lim_{i \to -\infty, i \in N_1} z_{i-1} = b_{-1}$.
\end{enumerate} 
 By the continuity of $f$, combined with the fact that $( x_i ) _{i \in \mathbb{Z}}$ is a backward asymptotic pseudo-orbit, we have $f(a_{-1})=a_0$ and $f(b_{-1})=b_0$. Notice that, once again, by shadowing $d(a_{-1}, b_{-1}) \leq \epsilon$. Continuing in this manner we can obtain two sequences of points, $a_0, a_{-1}, a_{-2} \ldots$, and $b_0, b_{-1}, b_{-2}\ldots$ as well as a sequence of subsets $N_0 \supseteq N_{-1} \supseteq N_{-2} \ldots$, such that for any $k \in \mathbb{N}$
  \begin{enumerate}[label=\roman*).]
  \item $i-k \leq 0$ for all $i \in N_{-k}$,
\item $\lim_{i \to -\infty, i \in N_k} x_{i-k} = a_{-k}$ and $f(a_{-k})=a_{-k+1}$,
\item $\lim_{i \to -\infty, i \in N_k} z_{i-k} = b_{-k}$ and $f(b_{-k})=b_{-k+1}$.
\end{enumerate} 
Therefore we have full orbits $( a_i ) _{i \in \mathbb{Z}}$ and $( b_i ) _{i \in \mathbb{Z}}$ for which $d(a_i, b_i) \leq \epsilon < \nicefrac{\eta}{2}$ (once again, using the fact that $( z_i ) _{i \in \mathbb{Z}}$ $\epsilon$-shadows $( x_i ) _{i \in \mathbb{Z}}$). But this is a contradiction; c-expansivity there exists $k \in \mathbb{Z}$ such that $d(a_k, b_k) \geq \eta$. Thus our initial assumption was false: we have that $d(z_i, x_i) \to 0$ as $i \to -\infty$.
 \end{proof}
 
%\emph{JSM: Have example of s-limit but not backward s-limit (and hence not two-sided). Also example of backward s-limit but not s-limit (and hence not two-sided). Therefore, unlike with shadowing, these are still distinct properties in the presence of compactness. Q: Can a system have backward and forward s-limit shadowing but not two-sided s-limit shadowing?}
The following is then immediate.

\begin{corollary}\label{CorollaryEquivShad_and_sLim}
 Let $(X,f)$ be a dynamical system. If $f$ is an expansive surjection then the following are equivalent:
 \begin{enumerate}
     \item $f$ has shadowing;
     \item $f$ has two-sided shadowing;
     \item $f$ has s-limit shadowing;
     \item $f$ has two-sided s-limit shadowing.
 \end{enumerate}
\end{corollary}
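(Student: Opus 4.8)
The plan is to obtain the corollary purely by stringing together the three results already established, arranged as a single cycle of implications through the four properties. The one preliminary observation needed is that an expansive surjection is in particular c-expansive, so that Theorem~\ref{thmcExpansiveSHadIFFsLimSHad} is applicable: a pair of two-sided orbits that stay within the expansivity constant of one another are, in particular, forward orbits that do so, hence they start at, and therefore coincide at, the same point, which is exactly c-expansivity (with the same constant).

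Now the cycle. The implication $(1)\Rightarrow(2)$ is precisely the first implication of Theorem~\ref{theorem-shadowing}, and uses nothing about $f$. Since $f$ is c-expansive, Theorem~\ref{thmcExpansiveSHadIFFsLimSHad} tells us that two-sided shadowing and two-sided s-limit shadowing are equivalent, so in particular $(2)\Rightarrow(4)$. Since $f$ is surjective, the second half of Proposition~\ref{prop_Twosided_slim_Implies_Other_slim} gives $(4)\Rightarrow(3)$. Finally $(3)\Rightarrow(1)$ holds by definition, since s-limit shadowing was defined to include ordinary shadowing. The loop $(1)\Rightarrow(2)\Rightarrow(4)\Rightarrow(3)\Rightarrow(1)$ therefore establishes that all four statements are equivalent, and in particular every pairwise equivalence is recovered.

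There is no genuine obstacle here; the corollary is essentially bookkeeping, and the real content sits in Theorems~\ref{theorem-shadowing} and~\ref{thmcExpansiveSHadIFFsLimSHad} and Proposition~\ref{prop_Twosided_slim_Implies_Other_slim}. The only point worth a moment's care is that the two standing hypotheses get fed to the right places — expansivity (through c-expansivity) is what makes Theorem~\ref{thmcExpansiveSHadIFFsLimSHad} available, while surjectivity is what makes the relevant clause of Proposition~\ref{prop_Twosided_slim_Implies_Other_slim} available — and that neither hypothesis is needed to close the loop via $(3)\Rightarrow(1)$. One could alternatively route $(2)\Rightarrow(1)$ directly through the backwards-shadowing clause of Theorem~\ref{theorem-shadowing} (using surjectivity again), but the cycle above is the shortest path and makes no redundant appeals.
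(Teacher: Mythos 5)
Your proposal is correct and is essentially the paper's own argument: the paper states the corollary as immediate from Theorem \ref{theorem-shadowing}, Theorem \ref{thmcExpansiveSHadIFFsLimSHad} and Proposition \ref{prop_Twosided_slim_Implies_Other_slim}, which is exactly the cycle $(1)\Rightarrow(2)\Rightarrow(4)\Rightarrow(3)\Rightarrow(1)$ you assemble, with $(3)\Rightarrow(1)$ holding by the definition of s-limit shadowing. Your preliminary remark that the expansivity hypothesis yields c-expansivity (so that Theorem \ref{thmcExpansiveSHadIFFsLimSHad} applies) is the right bookkeeping point and is unproblematic.
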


We note that the second property in the definition of two-sided s-limit shadowing (see Definition \ref{def: twosided+backward slim shadowing}(4)), namely that for every $\varepsilon>0$, there exists $\delta>0$ such that each asymptotic $\delta$-pseudo-orbit in $X$ is $\varepsilon$-shadowed by a two-sided orbit in $X$, has been previously studied in \cite{CarvalhoCordeiro2019}. The authors of that work coined this property as the \emph{$L$-shadowing property} and studied it in the context of dynamical systems whose mapping is a homeomorphism. We next show that under surjectivity, the $L$-shadowing property is sufficient to show two-sided shadowing. In other words, when the mapping is surjective, two-sided s-limit shadowing reduces simply to $L$-shadowing. This result is similar to that of the first, fourth and fifth named authors in \cite{GoodMitchellThomas} where it is shown that s-limit shadowing is equivalent to the second property in the definition of s-limit shadowing when the phase space is compact metric.

\begin{proposition}
 When $(X,f)$ is a surjective dynamical system, then $(X,f)$ has two-sided s-limit shadowing if and only if it has $L$-shadowing.
\end{proposition}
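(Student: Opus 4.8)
The plan is to treat the two implications separately, with all the substance in one of them. For the forward direction there is nothing to do beyond unwinding definitions: as observed just above the statement, the $L$-shadowing property is exactly the second clause in the definition of two-sided s-limit shadowing (Definition~\ref{def: twosided+backward slim shadowing}(4)), so a system with two-sided s-limit shadowing trivially has $L$-shadowing, and surjectivity plays no role here.

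For the converse, suppose $(X,f)$ is surjective and has $L$-shadowing. Since $L$-shadowing already supplies the second clause in the definition of two-sided s-limit shadowing, it suffices to prove that $f$ has two-sided shadowing, and by Theorem~\ref{theorem-shadowing} (the implication $(1)\Rightarrow(2)$) it is in turn enough to prove that $f$ has shadowing. Using the standard fact recalled in Section~\ref{section-shadowing} that, on a compact space, shadowing is equivalent to the $\varepsilon$-shadowing of all finite $\delta$-pseudo-orbits, fix $\varepsilon>0$, let $\delta>0$ be the constant furnished by $L$-shadowing for this $\varepsilon$, and let $(x_0,\dots,x_m)$ be a finite $\delta$-pseudo-orbit. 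I would extend it to a bi-infinite sequence $(x_i)_{i\in\mathbb Z}$ by setting $x_i:=f^{\,i-m}(x_m)$ for $i>m$ and, using surjectivity of $f$, choosing inductively for $i<0$ a point $x_i$ with $f(x_i)=x_{i+1}$. Then $d(f(x_i),x_{i+1})<\delta$ for $0\le i<m$ and $d(f(x_i),x_{i+1})=0$ for all other $i$, so $(x_i)_{i\in\mathbb Z}$ is a two-sided $\delta$-pseudo-orbit whose consecutive errors vanish for all sufficiently large $|i|$; in particular it is a two-sided asymptotic $\delta$-pseudo-orbit. By $L$-shadowing there is a point $z$ and a two-sided orbit $(z_i)_{i\in\mathbb Z}$ with $z_0=z$ which asymptotically $\varepsilon$-shadows $(x_i)_{i\in\mathbb Z}$; in particular $d(x_i,f^i(z))=d(x_i,z_i)<\varepsilon$ for $0\le i\le m$, so $z$ $\varepsilon$-shadows $(x_0,\dots,x_m)$. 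Hence $f$ has shadowing, hence two-sided shadowing, and together with the hypothesis of $L$-shadowing this gives two-sided s-limit shadowing.

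The one place where care is genuinely needed — and where the surjectivity hypothesis is actually used — is the extension step: to apply $L$-shadowing, which speaks only of two-sided asymptotic pseudo-orbits, the finite pseudo-orbit must first be embedded in a genuine bi-infinite asymptotic pseudo-orbit, and its backward half exists precisely because every point of $X$ has a preimage. This is the phenomenon that breaks down without surjectivity, in the spirit of Example~\ref{ExampleNotOnto} for the analogous Proposition~\ref{prop_Twosided_slim_Implies_Other_slim}. Everything else is routine: one uses ``asymptotically $\varepsilon$-shadows'' only through its weaker consequence ``$\varepsilon$-shadows on $\{0,\dots,m\}$'', so no control on the limiting behaviour of the shadowing orbit is needed to recover plain shadowing, and the passage from shadowing to two-sided shadowing is quoted directly from Theorem~\ref{theorem-shadowing}. (One could instead argue directly for bi-infinite $\delta$-pseudo-orbits by truncating, extending each truncation as above, shadowing, and passing to a subsequential limit as in the proof of Theorem~\ref{theorem-shadowing}, but routing through ordinary shadowing is cleaner.)
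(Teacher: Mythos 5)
Your proof is correct, but it reaches the key conclusion (that $L$-shadowing plus surjectivity yields ordinary shadowing) by a different route than the paper. The paper reuses the backward-extension argument from the proof of Proposition~\ref{prop_Twosided_slim_Implies_Other_slim} to show that every \emph{forward} asymptotic $\delta$-pseudo-orbit is asymptotically $\varepsilon$-shadowed, and then invokes the external result from \cite{GoodMitchellThomas} that this single clause already implies s-limit shadowing (and hence shadowing) on a compact space, before finishing with Theorem~\ref{theorem-shadowing}. You instead bypass that citation entirely: you embed an arbitrary \emph{finite} $\delta$-pseudo-orbit into a two-sided asymptotic $\delta$-pseudo-orbit (forward along the true orbit of the last point, backward by preimages supplied by surjectivity), apply $L$-shadowing, and restrict, then use the standard equivalence of shadowing with finite-pseudo-orbit shadowing on compact spaces (quoted at the start of Section~\ref{section-shadowing}) and the implication $(1)\Rightarrow(2)$ of Theorem~\ref{theorem-shadowing}. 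Both arguments use surjectivity at the same essential point, namely to build the backward half of the extended pseudo-orbit; yours buys a self-contained, elementary proof that does not lean on \cite{GoodMitchellThomas}, while the paper's is shorter given the machinery already in place and additionally records the stronger intermediate fact that the system has (forward) s-limit shadowing, not merely shadowing.
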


\begin{proof}
By the proof of Proposition \ref{prop_Twosided_slim_Implies_Other_slim}, as $f$ is onto, $(X,f)$ satisfies the first condition in s-limit shadowing (i.e.\ for any $\epsilon>0$ there exists $\delta>0$ such that for any asymptotic $\delta$-pseudo orbit $( x_i )_{i\in \mathbb{N}}$ there exists a point $z \in X$ which asymptotically $\epsilon$-shadows $(x_i )_{i \in \mathbb{N}}$). Therefore by the aforementioned result in \cite{GoodMitchellThomas}, $(X,f)$ has s-limit shadowing and, in particular, shadowing. The result now follows by applying Theorem \ref{theorem-shadowing}.
\end{proof}

\section{Unique Shadowing} \label{section-unique}

In his study of shadowing and stability, Walters \cite{WaltersP} proves that if $h$ is an expansive homeomorphism with shadowing, then for every $\epsilon>0$ there is a $\delta>0$ such that every $\delta$-pseudo-orbit is $\epsilon$-shadowed by a unique point from $X$. It turns out that the converse is true; a system is shadowing and expansive if and only if it has unique shadowing. By using a natural generalisation as seen in the work of Morales \cite{morales} of the notions of expansivity and positive expansivity, one can obtain results of a similar flavour which we exhibit in this section.

\begin{definition} Let $(X,f)$ be a dynamical system.
\begin{enumerate}
\item $(X,f)$ is said to be \textit{positively $n$-expansive}, for $n\in \mathbb{N}$, if there exists $r>0$ such that for any $x \in X$, the set \[\Gamma_+(x,r)=\{y \in X \mid \forall k \in \mathbb{N}_0 \, d(f^k(x),f^k(y)) < r\}, \] contains at most $n$ points.
%\item If $f$ is a homeomorphism, then  $(X,f)$ is said to be \textit{$n$-expansive}, for $n\in \mathbb{N}$, if there exists $r>0$ such that for any $x \in X$, the set \[\Gamma(x,r)=\{y \in X \mid \forall k \in \mathbb Z, \, d(f^k(x),f^k(y)) < r\}, \] contains at most $n$ points.
\item $(X,f)$ is said to be \textit{$n$-expansive}, for $n\in \mathbb{N}$, if there exists $r>0$ such that for any $x_0$ and any two-sided orbit $(x_n)_{n\in\mathbb Z}$ of $x_0$ the set of $y_0$ such that $y_0$ has a two-sided orbit $(y_n)_{n\in\mathbb Z}$ with $d(x_i, y_i) <r$ for all $i \in \mathbb{Z}$ contains at most $n$ points.
\end{enumerate}
\end{definition}

Hence, a system is (positively) $1$-expansive precisely when it is (positively) expansive. 

\begin{definition}
A dynamical system $(X,f)$ is said to have \textit{(two-sided) $n$-shadowing} if there exists $\eta>0$ such that for any $\epsilon>0$ with $\epsilon<\eta$ there exists $\delta>0$ such that given any (two-sided) $\delta$-pseudo-orbit there exists at least one point and at most $n$ points which $\epsilon$-shadow it.
\end{definition}
We refer to the property of $1$-shadowing as \textit{unique shadowing}. We first demonstrate a basic characterisation of these shadowing properties using the expansivity notions introduced above.

\begin{theorem}\label{thmnShadIFF}
Let $X$ be a metric space. For any $n\in \mathbb{N}$, a dynamical system $(X,f)$ 
\begin{enumerate}
\item has $n$-shadowing if and only if it has shadowing and is positively $n$-expansive.
\item has two-sided $n$-shadowing if and only if it has two-sided shadowing and is $n$-expansive.
\end{enumerate}
\end{theorem}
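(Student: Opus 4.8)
The plan is to prove both parts by the same two-step argument, obtaining part (2) from part (1) by systematically replacing forward objects (pseudo-orbits, orbits, and the sets $\Gamma_+$) by their two-sided analogues; so I describe part (1) in detail and then indicate the changes.

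For the forward implication of (1), suppose $(X,f)$ has $n$-shadowing with constant $\eta$. Shadowing is immediate: for $\epsilon<\eta$ the ``at least one point'' clause is exactly shadowing, and for $\epsilon\ge\eta$ one picks any $\epsilon'<\eta$ and uses the associated $\delta$. For positive $n$-expansivity I claim $r:=\eta/2$ works. If not, some $x\in X$ has $n+1$ distinct points $y_0,\dots,y_n\in\Gamma_+(x,r)$. The genuine orbit $(f^k(x))_{k\ge 0}$ is a $\delta$-pseudo-orbit for every $\delta$, in particular for the $\delta$ associated with some fixed $\epsilon\in(r,\eta)$; and each $y_j$ satisfies $d(f^k(y_j),f^k(x))<r<\epsilon$ for all $k$, so all $n+1$ of the $y_j$ $\epsilon$-shadow this pseudo-orbit, contradicting $n$-shadowing. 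For the converse of (1), suppose $(X,f)$ has shadowing and is positively $n$-expansive with constant $r$. Set $\eta:=r/2$, fix $\epsilon<\eta$, and let $\delta$ be the shadowing constant for $\epsilon$; every $\delta$-pseudo-orbit is then $\epsilon$-shadowed by at least one point. If some $\delta$-pseudo-orbit $(x_k)$ were $\epsilon$-shadowed by $n+1$ distinct points $z_0,\dots,z_n$, then $d(f^k(z_i),f^k(z_j))\le d(f^k(z_i),x_k)+d(x_k,f^k(z_j))<2\epsilon<r$ for all $k$, so $z_0,\dots,z_n\in\Gamma_+(z_0,r)$, contradicting positive $n$-expansivity; hence at most $n$ points $\epsilon$-shadow, and $(X,f)$ has $n$-shadowing.

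Part (2) runs identically. In the forward direction, given $x_0$ and a two-sided orbit $(x_i)_{i\in\mathbb Z}$ of it, this genuine two-sided orbit is a two-sided $\delta$-pseudo-orbit for every $\delta$; a violation of $n$-expansivity produces $n+1$ distinct points whose two-sided orbits stay within $r<\epsilon$ of $(x_i)$ and hence $\epsilon$-shadow it, contradicting two-sided $n$-shadowing (so $r=\eta/2$ is an $n$-expansivity constant). In the converse direction, if $n+1$ distinct points $z_0,\dots,z_n$ have two-sided orbits $\epsilon$-shadowing a two-sided $\delta$-pseudo-orbit, then those two-sided orbits are pairwise within $2\epsilon<r$ of each other, so $z_0,\dots,z_n$ all lie in the $n$-expansivity set of $z_0$ (with respect to its chosen two-sided orbit), a contradiction; hence two-sided shadowing plus $n$-expansivity gives two-sided $n$-shadowing with $\eta=r/2$.

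I do not expect a genuine obstacle here: the argument is a direct unwinding of the definitions. The only points needing care are the constant bookkeeping — the factor $2$ from the triangle inequality forces $\eta\le r/2$ in one direction and $r<\eta$ in the other — and, in part (2), keeping straight that ``a point $\epsilon$-shadows a two-sided pseudo-orbit'' means \emph{some} two-sided orbit of it does so, and that the \emph{same} two-sided orbit simultaneously serves as the witness of $r$-closeness used against $n$-expansivity.
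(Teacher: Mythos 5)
Your proposal is correct and follows essentially the same route as the paper: the forward direction feeds a genuine (two-sided) orbit of a point witnessing the failure of (positive) $n$-expansivity into the $n$-shadowing hypothesis as a $\delta$-pseudo-orbit, and the converse uses the triangle inequality to show $n+1$ shadowing points would all lie in a single $\Gamma_+(\cdot,r)$ (respectively, in the $n$-expansivity set of one chosen two-sided orbit), with the same constant bookkeeping $\eta=r/2$, $2\epsilon<r$. The paper only sketches part (2) as ``argued similarly,'' and your explicit handling of it — in particular tracking that the same two-sided orbit serves both as the shadowing witness and the closeness witness — matches what is intended.
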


\begin{proof}
Clearly if $(X,f)$ has $n$-shadowing then it has shadowing. Suppose for a contradiction that it is not positively $n$-expansive. Let $\eta>0$ be as in the definition of $n$-shadowing and suppose that $\epsilon>0$ is such that $\epsilon<\frac{\eta}{2}$. Then there exists $\delta>0$ ($\delta<\epsilon$) such that every $\delta$-pseudo-orbit is $\epsilon$-shadowed and by at most $n$ points. Let $x_0$ be a point such that $\Gamma_+(x_0,\epsilon)$ contains $n+1$ distinct points $x_0,x_1,\cdots,x_n$. Then $d(f^k(x_0),f^k(x_j))<\epsilon$ for all $k \geq 0$ and all $0\leq j \leq n$. Thus since $\{f^k(x_0)\}_{k \in \mathbb{N}_0}$ is a $\delta$-pseudo-orbit, and is $\epsilon$-shadowed by every such $x_j$, one obtains a contradiction to $n$-shadowing.

Now suppose $(X,f)$ has shadowing and is positively $n$-expansive. Let $r>0$ be a constant of the positive $n$-expansivity. We claim that $(X,f)$ has $n$-shadowing with $\eta=\frac{r}{2}$. Pick $\epsilon<\frac{r}{2}$ and let $\delta>0$ correspond to $\epsilon>0$ in the definition of shadowing. Suppose there exists a $\delta$-pseudo-orbit $(y_i)_{i \in \mathbb{N}_0}$ which is $\epsilon$-shadowed by $n+1$ distinct points $x_0, \ldots, x_n \in X$. Then by the triangle inequality, for all $n \in \mathbb{N}_0$ and any $i, j \in \{0,\ldots, n\}$, $d(f^n(x_i),f^n(x_j))< 2\epsilon < r$, a contradiction.

The proof of (2) can be argued similarly.
\end{proof}

\begin{corollary} If $(X,f)$ has $n$-shadowing then it has two-sided $n$-shadowing.\end{corollary}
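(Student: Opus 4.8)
The plan is to chain together the two equivalences established in Theorem \ref{thmnShadIFF} with the implication relating shadowing and two-sided shadowing from Theorem \ref{theorem-shadowing}. Concretely, suppose $(X,f)$ has $n$-shadowing. By Theorem \ref{thmnShadIFF}(1), this is equivalent to $f$ having shadowing and being positively $n$-expansive. First I would deduce two-sided shadowing from shadowing: this is exactly the implication $(1)\implies(2)$ of Theorem \ref{theorem-shadowing}, which holds without any surjectivity hypothesis. Then I would argue that positive $n$-expansivity implies $n$-expansivity, since the latter only requires controlling two-sided orbits whose $\mathbb{Z}$-indexed trajectories stay within $r$ of a fixed two-sided orbit, and such a condition in particular forces the forward halves to stay within $r$, so any two distinct $y_0$'s with such two-sided orbits would already violate positive $n$-expansivity on the forward part — hence the same constant $r>0$ witnesses $n$-expansivity. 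Having both two-sided shadowing and $n$-expansivity, Theorem \ref{thmnShadIFF}(2) then yields two-sided $n$-shadowing, completing the argument.

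The one point that needs a small amount of care is the comparison of the two expansivity notions, because their definitions are not literally nested: positive $n$-expansivity bounds $|\Gamma_+(x,r)|$, whereas $n$-expansivity bounds the number of basepoints $y_0$ admitting a full two-sided orbit that $r$-shadows a given two-sided orbit of $x_0$. The resolution is that if $y_0$ admits such a two-sided orbit $(y_i)_{i\in\mathbb{Z}}$ with $d(x_i,y_i)<r$ for all $i\in\mathbb{Z}$, then restricting to $i\ge 0$ gives $d(f^i(x_0),f^i(y_0))<r$ for all $i\in\mathbb{N}_0$, so $y_0\in\Gamma_+(x_0,r)$; thus the set counted by $n$-expansivity embeds into $\Gamma_+(x_0,r)$, and the bound $n$ transfers. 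I do not expect this to be a genuine obstacle, merely a routine unravelling of definitions; indeed the authors themselves remark that the proof of Theorem \ref{thmnShadIFF}(2) "can be argued similarly," so the machinery is already in place.

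Thus the corollary follows formally, and no new estimates are required beyond what Theorems \ref{theorem-shadowing} and \ref{thmnShadIFF} already provide.
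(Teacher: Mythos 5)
Your proposal is correct and matches the paper's argument, which likewise deduces the corollary by combining Theorems \ref{theorem-shadowing} and \ref{thmnShadIFF}; your extra step checking that positive $n$-expansivity implies $n$-expansivity (by restricting a two-sided $r$-close orbit to nonnegative indices, so the relevant set embeds into $\Gamma_+(x_0,r)$) is exactly the routine unravelling the paper leaves implicit, and it is carried out correctly.
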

\begin{proof}
This follows immediately by combining Theorems \ref{theorem-shadowing} and \ref{thmnShadIFF}.
\end{proof}

The converse of this is not true in general. Indeed, on infinite spaces there are no positively expansive homeomorphisms \cite{coven-keane} but there are expansive ones; the full shift on two symbols is such an example.

\begin{remark}
Since there are no positively expansive maps of the interval, no interval map has unique shadowing. 
\end{remark}

\begin{remark} \label{expansive-implies-finite-to-one}
Note that a positively $n$-expansive map on a compact metric is finite-to-one; if $f^{-1}(x)$ is infinite, then it has a limit point $z$ so that for any $r>0$, $\Gamma_+(z,r)$ is infinite. 
\end{remark}
One can also investigate how these versions of shadowing and expansivity interact with the $h$-shadowing property. Recall that a system $(X,f)$ has $h$-shadowing if for every $\epsilon>0$, there is a $\delta>0$ such that every finite $\delta$-pseudo orbit $(x_0,\dots,x_n)$ is $\epsilon$-shadowed by a point $z$ such that $f^n(z)=x_n$. It is known that $h$-shadowing implies s-limit shadowing which in turn implies limit shadowing \cite{BarwellGoodOprocha}. In \cite{BarwellGoodOprocha} it is also shown that a positively expansive map has shadowing if and only if it has h-shadowing. Carvalho and Cordiero \cite{CarvalhoCordeiro} prove that an $n$-expansive homeomorphism with shadowing has limit shadowing. Using these results, together with Theorem \ref{thmcExpansiveSHadIFFsLimSHad}, the following is almost immediate. The proof of (3) follows directly from the proof of Theorem C in \cite{CarvalhoCordeiro} given Remark \ref{expansive-implies-finite-to-one} above.
 
 \begin{corollary}\label{corollaryUniqueshad_hshad_limitshad_s-lim_shad}
 Let $f \colon X\to X$ be a continuous map on the compact metric space $X$. 
\begin{enumerate}
\item $f$ has unique shadowing if and only if it has h-shadowing and is positively expansive.
\item $f$ has two-sided unique shadowing if and only if it has two-sided s-limit shadowing and is expansive.
\item If $f$ is a positively $n$-expansive surjection and has shadowing, then it has limit shadowing. 
\end{enumerate}
 \end{corollary}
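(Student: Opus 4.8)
The plan is to treat the three items separately, in each case reducing to results already available. For (1): if $f$ has unique shadowing then by Theorem \ref{thmnShadIFF}(1) (with $n=1$) it has shadowing and is positively expansive, and by the cited result of \cite{BarwellGoodOprocha} a positively expansive map with shadowing has h-shadowing. Conversely, h-shadowing trivially implies shadowing (any finite pseudo-orbit is shadowed by \emph{some} point, ignoring the exact-hit condition, and shadowing in compact metric spaces is equivalent to shadowing of finite pseudo-orbits), so if in addition $f$ is positively expansive, Theorem \ref{thmnShadIFF}(1) gives $1$-shadowing, i.e.\ unique shadowing. The only care needed is to confirm that h-shadowing does indeed yield the finite-pseudo-orbit form of shadowing without the exact-hit clause, which is immediate.

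For (2): if $f$ has two-sided unique shadowing then by Theorem \ref{thmnShadIFF}(2) it has two-sided shadowing and is $1$-expansive, i.e.\ expansive (in the c-expansive sense); since an expansive surjection—wait, here we must be slightly careful about surjectivity. Note that two-sided shadowing already presupposes the existence of two-sided orbits, which forces $f$ to be onto on the relevant part of the space; in any case Theorem \ref{thmcExpansiveSHadIFFsLimSHad} applies to any c-expansive system with two-sided shadowing and yields two-sided s-limit shadowing. Conversely, if $f$ has two-sided s-limit shadowing then in particular it has two-sided shadowing, so combined with expansivity Theorem \ref{thmnShadIFF}(2) gives two-sided $1$-shadowing. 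The main point to get right is simply the bookkeeping that ``expansive'' here means $1$-expansive in the two-sided sense, matching the hypothesis of Theorem \ref{thmnShadIFF}(2), and invoking Theorem \ref{thmcExpansiveSHadIFFsLimSHad} for the upgrade from two-sided shadowing to two-sided s-limit shadowing.

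For (3): this is the part requiring genuine external input. Carvalho and Cordeiro \cite{CarvalhoCordeiro} prove (their Theorem C) that an $n$-expansive \emph{homeomorphism} with shadowing has limit shadowing. The task is to observe that their argument does not really use invertibility, only that preimages are controlled, which in our setting is supplied by Remark \ref{expansive-implies-finite-to-one}: a positively $n$-expansive map on a compact metric space is finite-to-one. So the plan is to walk through the structure of their proof of Theorem C and check that each place where a homeomorphism is used can be replaced by: (a) surjectivity of $f$ (assumed), giving existence of backward orbits, and (b) the finite-to-one property, giving that these backward orbits form a finitely-branching tree so that compactness-of-branches arguments still go through. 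I expect this verification—rather than anything in (1) or (2)—to be the main obstacle, since it depends on the precise internal mechanics of \cite{CarvalhoCordeiro}; but as flagged in the paragraph preceding the corollary, the adaptation is routine given Remark \ref{expansive-implies-finite-to-one}.
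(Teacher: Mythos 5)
Your proposal is correct and follows essentially the same route as the paper: items (1) and (2) are obtained by combining Theorem \ref{thmnShadIFF} with the result of \cite{BarwellGoodOprocha} (shadowing $\Leftrightarrow$ h-shadowing for positively expansive maps) and Theorem \ref{thmcExpansiveSHadIFFsLimSHad} respectively, and item (3) by adapting the proof of Theorem C of \cite{CarvalhoCordeiro} using surjectivity together with the finite-to-one property from Remark \ref{expansive-implies-finite-to-one}. Your caveat about surjectivity in (2) resolves itself exactly as you note, since Theorem \ref{thmcExpansiveSHadIFFsLimSHad} does not require $f$ to be onto.
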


One may question how distinct the different notions of $n$ (positive) expansivity are for different values of $n$. This has been investigated previously in the context of homeomorphism systems. For example, Li and Zhang \cite{li-zhang-generalized} construct homeomorphisms that are (positively) $n$-expansive but not (positively) $(n-1)$-expansive for any $n\geq 2$. In \cite{CarvalhoCordeiro}, Carvalho and Cordiero show that for any $n\geq 2$ there exists a homeomorphism with shadowing that is $n$-expansive but not $(n-1)$-expansive. Here we provide an example of a surjective system with shadowing that is positively $n$-expansive but not positively $(n-1)$-expansive on the forward orbits as per the definition above. When $n=2$, our example does not have h-shadowing, meaning thst this serves as a counterexample to the would-be natural generalisition of (1) in Corollary \ref{corollaryUniqueshad_hshad_limitshad_s-lim_shad}; that is, $n$-shadowing is not necessarily equivalent to $h$-shadowing and positive $n$-expansivity . We note, however, that the system does have two-sided s-limit shadowing. 

%\textit{JSM: Why not h-shadowing? Show. Plus two-sided s-limit. Oh yeah, for h: run along the top and then jump one back in a pseudo orbit - this point only has a preimage on level $X_1$ which is not $\epsilon$-close to any part of the pseudo-orbit.}

\begin{example}\label{ExampleNexpansive}
Fix $n\geq 2$. Firstly, we define a subset $X_0$ of $\mathbb{R}^2$ recursively in the following manner. Let $Y_0=\{(3,0)\}$. Given sets $Y_0,\ldots, Y_k$, one obtains $Y_{k+1}$ by considering the point $(x,0)\in Y_k$ with the smallest first coordinate. Let $Y_{k+1}$ consist of the points $(x-2^{-k},0)$ and $(x-2^{-k}-2^{-(k+1)},0)$ along with $n-2$ points on the straight line segment whose endpoints are $(x-2^{-k},0)$ and $(x-2^{-k}-2^{-(k+1)},0)$ such that all $n$ of the points are equidistant. Thus, each $Y_k$ for $k\geq 1$ contains exactly $n$ points that are equally spaced along the $x$-axis. Moreover, by construction, all points in each $Y_k$ have positive first coordinate and are distinct and in addition, $Y_i\cap Y_j=\emptyset$ for all $i\neq j$. Let $X_0=\overline{\bigcup_{k=0}^\infty Y_k}=\bigcup_{k=0}^\infty Y_k\cup\{(0,0)\}$. One then defines the sets $X_i$ recursively. Given $X_0,\ldots,X_k$, let $(p,q)\in X_k$ be the point such that $p$ is maximal over the collection of all first coordinates of points in $X_k$ (the second coordinate will be the same for all points in $X_k$ by construction). Then, define $X_{k+1}$ as
	\begin{align*}
    	X_{k+1}=\{(x,y)\in \mathbb{R}^2\mid (x,y+2^{-k})\in X_k\setminus\{(p,q)\}\}.
    \end{align*}
One then defines $X=\bigcup_{k=0}^\infty X_k\cup\{(0,-2)\}$ so that $X$ is closed in $\mathbb{R}^2$. One may then endow $X$ with the standard metric from $\mathbb{R}^2$ to form a metric space. \par 
Next, one defines a map $f:X\to X$ in the following manner. Let $(0,0)$, $(3,0)$ and $(0,-2)$ each be fixed points. For each $k\geq 1$, let $f$ map each point in $Y_k$ to the point in $Y_{k-1}$ with minimal first coordinate so that this defines $f$ on the entirety of $X_0$. For each $k\geq 1$, define $f$ on $X_k$ to map the point $(x,y)\in X_k\setminus\{(0,-\sum_{i=0}^{k-1}2^{-i})\}$ to the point $(z,y+2^{-(k-1)})\in X_{k-1}$ where 
	\begin{align*}
    	z=\min\{x'\mid x<x'\ \text{and}\ (x',y+2^{-(k-1)})\in X_{k-1}\},
    \end{align*}
and then let $f:(0,-\sum_{i=0}^{k-1}2^{-i})\mapsto (0,-\sum_{i=0}^{k-2} 2^{-i})$ for $k\geq 2$, and $f:(0,-1)\mapsto (0,0)$ for $k=1$. By construction, $f$ is then a continuous surjection.\par
Moreover, it is positively $n$-expansive. Indeed, take $r=1/4$ and suppose firstly that $x\in X$ is not one of the fixed points. By construction, the set $\Gamma_+(x,r)$ can contain no points from an $X_k$ different to that containing $x$. Indeed, if $x\in X_0$ this is clear since points in $X_k$ for $k\geq 1$ are at least a distance of 1 away from $x$. Moreover, if $x\in X_k$ for some $k\geq 1$ then there is an iterate of $x$ that is in $X_1$ and so the corresponding iterate of any point that began on $X_j$ for some $j\neq k$ will not be on $X_1$ and hence must be at least a distance of $1/2$ from the iterate of $x$. So, consider firstly the case when $x\in X_0$. Suppose that $j\geq 1$ is such that $x\in Y_j$ then by construction, no point in $X_0$ that is not in $Y_j$ can be in $\Gamma_+(x,r)$ since there will be an iterate of $x$ that is in $Y_1$, and the corresponding iterate of the points not in $Y_j$ will not be in $Y_1$ and hence will be at least a distance of $1/2$ away. Thus, $\Gamma_+(x,r)\subseteq Y_j$ and so since $Y_j$ consists of $n$ points, $|\Gamma_+(x,r)|\leq n$ . Suppose now then that $x\in X_k$ for some $k\geq 1$ then by construction, there exists an $\ell\in\mathbb{N}_0$ such that all points in $\Gamma_+(x,r)$ map onto $X_0$ for the first time under the $\ell$th iterate. Since $f$ is injective on the points in $X\setminus X_0$, each of these $\ell$th iterates must be distinct in $X_0$ and so from the case described previously where $x$ originated in $X_0$, this means that there can be at most $n$ points in $\Gamma_+(x,r)$. It remains to check the fixed points. If $x=(3,0)$, then there is no other point that lies within a distance of $1/4$ from it so $\Gamma_+(x,r)=\{x\}$. If $x=(0,0)$ or $(0,-2)$, then every point that lies with a distance of $1/4$ from it has some iterate that is equal to $(3,0)$ and hence has distance greater than $1/4$ from it so that in these cases also, $\Gamma_+(x,r)=\{x\}$. Thus, $(X,f)$ is positively $n$-expansive. \par 
Conversely, $(X,f)$ is not $(n-1)$-expansive. Indeed, suppose there were such an $r>0$ that exhibited this type of expansivity. Select $k>0$ such that $2^{-k}<r$. Let $x\in Y_k$, then note that by construction each point in $Y_k$ has distance less than $r$ from $x$ and has the same image under $f$. Thus $Y_k\subseteq\Gamma_+(x,r)$, so that $n\leq\Gamma_+(x,r)$ (in fact it is equal by $n$-expansivity). Hence, $(X,f)$ is not $(n-1)$ expansive.
\end{example}
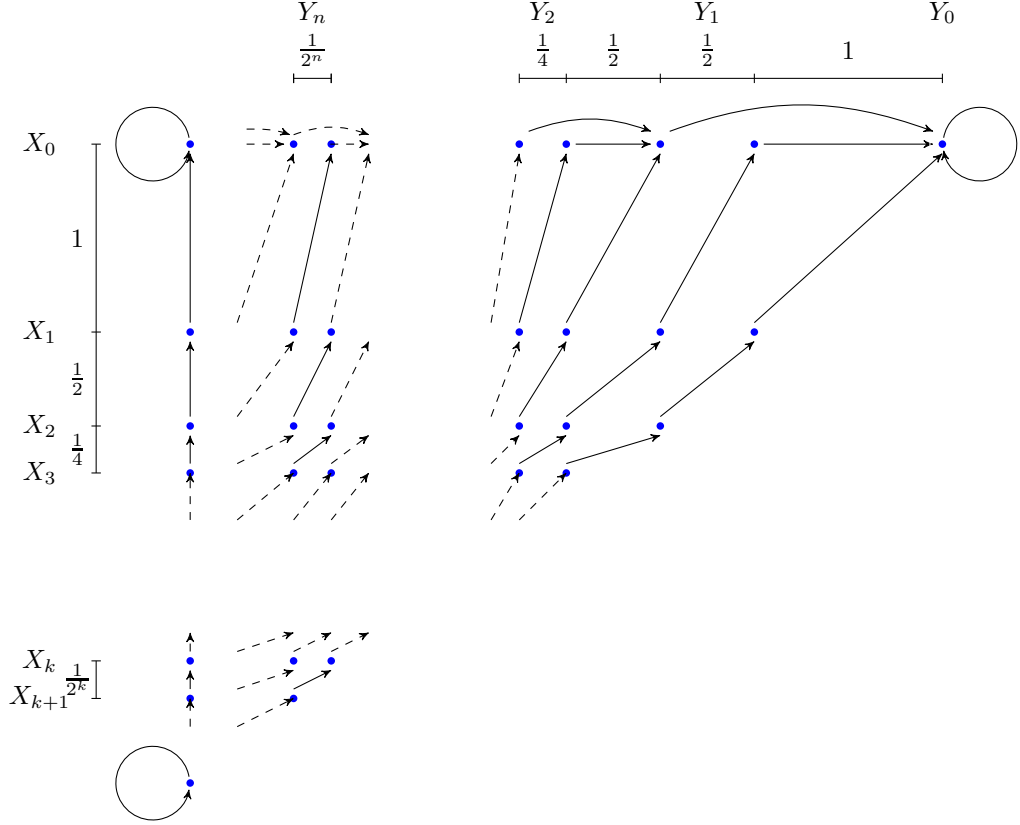
\begin{figure}[h!]
\centering
\begin{tikzpicture}[>=stealth',scale=2.5]
	%layer one nodes and distances 
	\draw (3,.35) -- (0.75,.35);
	\foreach \x in {3,2,1.5,1,0.75,-.45,-.25,-1}
		{
		\fill[blue] (\x,0) circle (.2mm);
		}
	\foreach \x in {3,2,1.5,1,0.75,-.45,-.25}
		{	
		\draw (\x,.325) -- (\x,.375);
		}
	\foreach \y/\z in {2.5/1,1.75/\frac{1}{2},1.25/\frac{1}{2},0.875/\frac{1}{4}}
		{
		\node at (\y,.5) {$\z$};
		}	
	\foreach \y/\z in {3/0,1.75/1,0.875/2}
		{
		\node at (\y,.7) {$Y_\z$};
		}	
	\draw (-0.45,.325) -- (-0.45,.375);
	\draw (-0.25,.325) -- (-0.25,.375);
	\draw (-0.45,.35) -- (-0.25,.35);
	\node at (-.35,.5) {$\frac{1}{2^n}$};
    \node at (-.35,.7) {$Y_n$};
	
	%layer 2 nodes and distances
	\foreach \x in {2,1.5,1,0.75,-.45,-.25,-1}
		{
		\fill[blue] (\x,-1) circle (.2mm);
		}
	\draw (-1.525,0) -- (-1.475,0);
	\draw (-1.525,-1) -- (-1.475,-1);
	\draw (-1.5,0) -- (-1.5,-1);
	\node at (-1.6,-.5) {$1$};
    \node at (-1.8,0) {$X_0$};
	
	%layer 3 nodes and distances
	\foreach \x in {1.5,1,0.75,-.45,-.25,-1}
		{
		\fill[blue] (\x,-1.5) circle (.2mm);
		}
	\draw (-1.525,-1.5) -- (-1.475,-1.5);
	\draw (-1.5,-1) -- (-1.5,-1.5);
	\node at (-1.6,-1.25) {$\frac{1}{2}$};
    \node at (-1.8,-1) {$X_1$};
	
	%layer 3 nodes and distances
	\foreach \x in {1,0.75,-.45,-.25,-1}
		{
		\fill[blue] (\x,-1.75) circle (.2mm);
		}
	\draw (-1.525,-1.75) -- (-1.475,-1.75);
	\draw (-1.5,-1.5) -- (-1.5,-1.75);
	\node at (-1.6,-1.625) {$\frac{1}{4}$};
    \node at (-1.8,-1.5) {$X_2$};
	
	%layer n nodes and distances
	\foreach \x in {-.45,-.25,-1}
		{
		\fill[blue] (\x,-2.75) circle (.2mm);
		}
	\draw (-1.525,-2.75) -- (-1.475,-2.75);
	
	\foreach \x in {-.45,-1}
		{
		\fill[blue] (\x,-2.95) circle (.2mm);
		}
	\draw (-1.525,-2.95) -- (-1.475,-2.95);
	\draw (-1.5,-2.75) -- (-1.5,-2.95);
	\node at (-1.6,-2.85) {$\frac{1}{2^k}$};
	\node at (-1.8,-1.75) {$X_3$};
    \node at (-1.8,-2.75) {$X_k$};
    \node at (-1.8,-2.95) {$X_{k+1}$};
    
	%arrows
	\foreach \x/\y in {2.05/2.95,1.05/1.45}
		{
		\draw[->] (\x,0) -- (\y,0);
		}
	\foreach \x/\y in {0.75/1,1.5/2,3/3}
		{
		\node (\y) at (\x,0.05) {};
		}
	\foreach \x/\y in {1/2,2/3}
		{
		\draw[->] (\x) to[out=20,in=160] (\y);
		}
	\draw[dashed,->] (-0.45,0.05) to[out=20,in=160] (-0.05,0.05);
	\draw[dashed,->] (-0.255,0) -- (-0.05,0);
	\node[circle,draw=white] (origin) at (-1,0) {};
	\drawloop[->,stretch=0.5]{origin}{100}{260};
	\node[circle,draw=white] (end) at (3,0) {};
	\drawloop[<-,stretch=0.5]{end}{280}{440};
	\draw[dashed,->] (-.7,0) -- (-.5,0);
	\draw[dashed,->] (-.7,.08) to[out=0,in=160] (-0.47,0.05);
	
	%layer 2 arrows
	\foreach \y/\x in {3/2,2/1.5,1.5/1,1/0.75,-.25/-.45,-1/-1}
		{
		\draw[->] (\x,-.95) -- (\y,-.05);
		}
	\draw[dashed, ->] (.6,-.95) -- (.75,-.05);
	\draw[->,dashed] (-.25,-.95) -- (-.05,-0.05);
	\draw[->,dashed] (-.75,-.95) -- (-.45,-0.05);
	
	%\layer 3 arrows
	\foreach \y/\x in {2/1.5,1.5/1,1/0.75,-.25/-.45,-1/-1}
		{
		\draw[->] (\x,-1.45) -- (\y,-1.05);
		}
	\draw[dashed, ->] (.6,-1.45) -- (.75,-1.05);
	\draw[->,dashed] (-.25,-1.45) -- (-.05,-1.05);
	\draw[->,dashed] (-.75,-1.45) -- (-.45,-1.05);
		
	%\layer 4 arrows
	\foreach \y/\x in {1.5/1,1/0.75,-.25/-.45,-1/-1}
		{
		\draw[->] (\x,-1.7) -- (\y,-1.55);
		}
	\draw[dashed, ->] (.6,-1.7) -- (.75,-1.55);
	\draw[->,dashed] (-.25,-1.7) -- (-.05,-1.55);
	\draw[->,dashed] (-.75,-1.7) -- (-.45,-1.55);
		
	%layer n arrows
	\draw[dashed,->] (-.45,-2.7) -- (-.25,-2.6);
	\draw[dashed,->] (-.25,-2.7) -- (-.05,-2.6);
	\draw[->] (-.45,-2.9) -- (-.25,-2.8);
	\draw[dashed, ->] (-.75,-2.9) -- (-.45,-2.8);
	\draw[->] (-1,-2.9) -- (-1,-2.8);
	\draw[dashed,->] (-1,-2.7) -- (-1,-2.6);
	\draw[dashed, ->] (-1,-3.1) -- (-1,-2.95);
	\draw[dashed,->] (-.75,-3.1) -- (-.45,-2.95);	
	\draw[dashed,->] (-.75,-2.7) -- (-.45,-2.6);	
	
	%layer 5 arrows
	\foreach \y/\x in {1/0.75,-.25/-.45,-1/-1}
		{
		\draw[dashed, ->] (\x,-2) -- (\y,-1.75);
		}
	\draw[dashed, ->] (.6,-2) -- (.75,-1.75);
	\draw[->,dashed] (-.25,-2) -- (-.05,-1.75);
	\draw[->,dashed] (-.75,-2) -- (-.45,-1.75);
	
	%final layer
	\node[circle,draw=white] (origin) at (-1,-3.4) {};
	\drawloop[->,stretch=0.5]{origin}{100}{260};
	\fill[blue] (-1,-3.4) circle (.2mm);
\end{tikzpicture}
\caption{The construction from Example \ref{ExampleNexpansive} for a $2$-expansive map}
\end{figure}

\begin{remark}
It is known that if a system on a compact space has shadowing and is expansive then it is topologically stable (see \cite{WaltersP}). By Theorem \ref{thmnShadIFF} it can be equivalently said that compact systems with unique shadowing are topologically stable.
\end{remark}

Clearly there is more to be said on uniqueness and how it modifies other forms of shadowing. For example, one may define suitably `unique' variants of the other shadowing types mentioned in this paper, i.e.\ s-limit shadowing, h-shadowing and limit shadowing.

\begin{definition}Let $(X,f)$ be a dynamical system. The map $f$ has \emph{unique s-limit shadowing} if
\begin{enumerate}
\item it has unique shadowing; and,
\item there exists $\eta>0$ such that for any $\epsilon>0$ with $\epsilon< \eta$ there exists $\delta>0$ such that every asymptotic $\delta$-pseudo-orbit is asymptotically $\epsilon$-shadowed by a unique point.
\end{enumerate}
\end{definition}

We note that postulating uniqueness in condition (2) is of course unnecessary in virtue of condition (1).  On the other hand, unlike the situation for for s-limit shadowing, it is not clear that condition (2) implies condition (1).

\begin{definition}
Let $(X,f)$ be a dynamical system. The map $f$ has \emph{unique h-shadowing} if there exists $\eta>0$ such that for any $\epsilon>0$ there exists $\delta>0$ such that for any finite $\delta$-pseudo-orbit $(x_0, \ldots , x_n)$ there exists a unique point $z$ such that $d(f^i(z), x_i) < \epsilon$ for each $i \in \{0, \ldots, n\}$ and $f^n(z)=x_n$.
\end{definition}

\begin{definition} Let $(X,f)$ be a dynamical system. The map $f$ has \emph{unique limit shadowing} if every asymptotic pseudo-orbit is asymptotically shadowed by a unique point.
\end{definition}

The proofs of Theorems  \ref{corollaryUnique_s_limShad}, \ref{corollaryUniqueShadImpliesUnique_h-shad} and \ref{corollaryUniqueLimitShad} come easily given our previous discussions and are thereby omitted. It is worth remarking upon how `uniqueness' modifies the various properties: for example, on a compact space shadowing is strictly weaker than h-shadowing \cite{BarwellGoodOprocha} but Theorem \ref{corollaryUniqueShadImpliesUnique_h-shad} and Example \ref{example_Unique_h_shad_not_unique_shad} together entail that unique shadowing is strictly stronger than unique h-shadowing.

\begin{theorem}
\label{corollaryUnique_s_limShad}
Let $(X,f)$ be a dynamical system. Then the map $f$ has
\begin{enumerate}
    \item unique shadowing if and only if it has unique s-limit shadowing.
    \item two-sided unique shadowing if and only if it has two-sided unique s-limit shadowing.
\end{enumerate}
\end{theorem}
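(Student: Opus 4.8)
The plan is to treat both parts as formal consequences of the definitions together with the results already established in the excerpt, chiefly Corollary \ref{corollaryUniqueshad_hshad_limitshad_s-lim_shad} and Theorem \ref{thmcExpansiveSHadIFFsLimSHad}. The backward direction of each part is immediate: unique s-limit shadowing includes unique shadowing as condition (1) of its definition, and similarly in the two-sided case, so there is nothing to prove there. The content is entirely in the forward direction, i.e.\ showing that unique shadowing already forces the asymptotic-pseudo-orbit clause.

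For part (1), I would first invoke Theorem \ref{thmnShadIFF}(1): unique shadowing is the same as $1$-shadowing, hence $f$ has shadowing and is positively expansive. Shadowing on a compact space implies s-limit shadowing for positively expansive maps --- indeed by Corollary \ref{CorollaryEquivShad_and_sLim} (or directly via Theorem 3.7 of \cite{BarwellGoodOprochaRaines}) a positively expansive map with shadowing has s-limit shadowing --- so every asymptotic $\delta$-pseudo-orbit is asymptotically $\epsilon$-shadowed by \emph{some} point $z$. It remains to see that $z$ is unique. Here I would simply take $\eta$ to be the positive-expansivity constant $r$ (or $r/2$, matching the proof of Theorem \ref{thmnShadIFF}), note that two points asymptotically $\epsilon$-shadowing the same sequence with $\epsilon<\eta$ stay within $2\epsilon<r$ of each other on all forward iterates, and apply positive expansivity to conclude they coincide. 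Thus condition (2) of unique s-limit shadowing holds, and the forward implication is done.

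For part (2) the argument runs in parallel but with the two-sided and c-expansive machinery. Two-sided unique shadowing is two-sided $1$-shadowing, so by Theorem \ref{thmnShadIFF}(2) we get two-sided shadowing and $1$-expansivity (i.e.\ expansivity/c-expansivity). By Theorem \ref{thmcExpansiveSHadIFFsLimSHad}, c-expansivity plus two-sided shadowing gives two-sided s-limit shadowing, so each asymptotic $\delta$-pseudo-orbit --- viewed appropriately in the two-sided sense --- is asymptotically $\epsilon$-shadowed by some two-sided orbit; uniqueness of the shadowing point again follows by taking $\epsilon$ below half the c-expansivity constant and using that two two-sided orbits staying within $2\epsilon$ of the same sequence stay within $2\epsilon$ of each other on all of $\mathbb Z$, forcing equality.

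I do not anticipate a genuine obstacle: the only mildly delicate point is bookkeeping the quantifiers in the definition of unique (s-limit) shadowing --- the outer $\eta$, and the requirement that the single $\eta$ work simultaneously for the shadowing and the asymptotic-shadowing clauses --- but this is handled by choosing $\eta$ to be (half) the common expansivity constant throughout, exactly as in the proof of Theorem \ref{thmnShadIFF}. Accordingly the authors are justified in omitting the details, and the proof above is essentially a reassembly of Theorems \ref{theorem-shadowing}, \ref{thmcExpansiveSHadIFFsLimSHad} and \ref{thmnShadIFF}.
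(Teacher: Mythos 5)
Your proposal is correct and is exactly the assembly of previous results (Theorem \ref{thmnShadIFF}, Theorem \ref{thmcExpansiveSHadIFFsLimSHad}, and the cited s-limit/h-shadowing results) that the paper has in mind when it omits the proof as following ``easily given our previous discussions.'' One small caution: Corollary \ref{CorollaryEquivShad_and_sLim} assumes surjectivity, which is not given here, so in part (1) you should lean on your alternative route (unique shadowing $\Rightarrow$ h-shadowing plus positive expansivity by Corollary \ref{corollaryUniqueshad_hshad_limitshad_s-lim_shad}(1), and h-shadowing $\Rightarrow$ s-limit shadowing) rather than on that corollary.
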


\begin{theorem}\label{corollaryUniqueShadImpliesUnique_h-shad} Let $(X,f)$ be a dynamical system. If $f$ has unique shadowing then it has unique h-shadowing.
\end{theorem}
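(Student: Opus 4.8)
The plan is to combine the two characterisations already established: by Theorem \ref{thmnShadIFF}(1) (with $n=1$), unique shadowing is equivalent to shadowing together with positive expansivity; and by Corollary \ref{corollaryUniqueshad_hshad_limitshad_s-lim_shad}(1), unique shadowing is equivalent to $h$-shadowing together with positive expansivity. So if $f$ has unique shadowing, then $f$ has $h$-shadowing and $f$ is positively expansive; the only remaining task is to upgrade plain $h$-shadowing to \emph{unique} $h$-shadowing using the positive expansivity constant.

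First I would fix the positive expansivity constant $r>0$ and set $\eta = r/2$. Given $\epsilon>0$, I would without loss of generality assume $\epsilon < r/2$ (shrinking $\epsilon$ only strengthens the conclusion), and let $\delta>0$ be the constant from $h$-shadowing corresponding to this $\epsilon$. Now suppose $(x_0,\dots,x_n)$ is a finite $\delta$-pseudo-orbit. By $h$-shadowing there is at least one point $z$ with $d(f^i(z),x_i)<\epsilon$ for all $i\le n$ and $f^n(z)=x_n$. If $z'$ is another such point, then $d(f^i(z),f^i(z'))<2\epsilon<r$ for $0\le i\le n$; but moreover $f^n(z)=x_n=f^n(z')$, so $f^{n+k}(z)=f^{n+k}(z')$ for all $k\ge 0$, whence $d(f^i(z),f^i(z'))=0<r$ for all $i\ge n$ as well. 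Thus $z'\in\Gamma_+(z,r)$, and positive expansivity (the $n=1$ case) forces $z'=z$. This gives uniqueness, so $f$ has unique $h$-shadowing with the constant $\eta = r/2$.

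I do not expect any genuine obstacle here: the argument is the same bookkeeping that appears in the proof of Theorem \ref{thmnShadIFF}, the only new ingredient being the observation that the exact-hit condition $f^n(z)=x_n$ propagates equality of the two shadowing points beyond index $n$ for free, so that the pair $(z,z')$ lands in a single set $\Gamma_+(z,r)$. The mild point to be careful about is the placement of the quantifier on $\eta$: the definition of unique $h$-shadowing requires a single $\eta$ that works for all $\epsilon$, and since $\eta=r/2$ depends only on the expansivity constant this is automatic once we note that enlarging $\epsilon$ past $r/2$ can be handled by simply using the $\delta$ associated with $r/2$ in place of $\delta(\epsilon)$. This is precisely the same move used throughout Section \ref{section-unique}, so the proof is short enough that, as the authors indicate, it may reasonably be omitted.
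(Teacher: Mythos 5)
Your proof is correct and is essentially the argument the paper has in mind when it omits the proof as ``easy given our previous discussions'': extract shadowing and positive expansivity via Theorem \ref{thmnShadIFF} with $n=1$, get h-shadowing from Corollary \ref{corollaryUniqueshad_hshad_limitshad_s-lim_shad}(1), and then use the expansivity constant $r$ (with $\eta=r/2$, $\epsilon<r/2$) together with the exact-hit condition $f^n(z)=x_n=f^n(z')$ to place $z'$ in $\Gamma_+(z,r)$ and force $z'=z$. One small caveat: your closing remark about handling $\epsilon\ge r/2$ by reusing $\delta(r/2)$ would give existence but not uniqueness among $\epsilon$-shadowing points for such large $\epsilon$; this is harmless, though, because the role of $\eta$ in the definition of unique h-shadowing (as in the paper's other ``unique'' definitions) is precisely to restrict to $\epsilon<\eta$, so your main argument already covers everything required.
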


Example \ref{example_Unique_h_shad_not_unique_shad} shows that the converse to Theorem \ref{corollaryUniqueShadImpliesUnique_h-shad} is false.

\begin{example}\label{example_Unique_h_shad_not_unique_shad} Consider $X= \{1/2^n \mid n \in \mathbb{N}_0\} \cup \{0\}$ and let $f$ be the identity map on $X$. Then $f$ has unique h-shadowing but, by Corollary \ref{corollaryUniqueshad_hshad_limitshad_s-lim_shad}, not unique shadowing because it is not positively expansive.
\end{example}

\begin{theorem}\label{corollaryUniqueLimitShad} Let $(X,f)$ be a dynamical system. The map $f$ has unique limit shadowing if and only if it has limit shadowing and no asymptotic pairs.  Moreover, if $f$ has unique shadowing, then $f$ is injective. 
\end{theorem}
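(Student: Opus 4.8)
The plan is to prove the equivalence by its two implications and then read off the injectivity statement. Throughout, call a pair of distinct points $x\neq y$ an \emph{asymptotic pair} when $\lim_{i\to\infty}d(f^i(x),f^i(y))=0$; note that this is precisely the condition for $x$ to asymptotically shadow the orbit $(f^i(y))_{i\in\mathbb{N}_0}$ (and vice versa), so this notion is exactly what measures the failure of uniqueness among orbits.

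For the forward implication, assume $f$ has unique limit shadowing. Limit shadowing is part of the definition, so it remains only to show that $f$ has no asymptotic pairs. Suppose for contradiction that $x\neq y$ form an asymptotic pair. The orbit $(f^i(x))_{i\in\mathbb{N}_0}$ is an asymptotic pseudo-orbit, since $d\bigl(f(f^i(x)),f^{i+1}(x)\bigr)=0$ for every $i$; it is (trivially) asymptotically shadowed by $x$, and, because $d(f^i(y),f^i(x))\to 0$, it is also asymptotically shadowed by $y$. Since $x\neq y$, this contradicts the uniqueness in the definition.

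For the reverse implication, assume $f$ has limit shadowing and no asymptotic pairs, and let $(x_i)_{i\in\mathbb{N}_0}$ be an asymptotic pseudo-orbit. Limit shadowing yields a point $z$ asymptotically shadowing $(x_i)$. If $z'$ also asymptotically shadows $(x_i)$, then the triangle inequality gives $d(f^i(z),f^i(z'))\le d(f^i(z),x_i)+d(x_i,f^i(z'))\to 0$, so $\{z,z'\}$ would be an asymptotic pair unless $z=z'$; as there are no asymptotic pairs, $z=z'$, which establishes uniqueness.

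Finally, for the concluding claim, suppose $f$ has unique limit shadowing; by the equivalence just proved, $f$ has no asymptotic pairs. If $f$ were not injective, choose $x\neq y$ with $f(x)=f(y)$; then $d(f^i(x),f^i(y))=0$ for all $i\geq 1$, so $\{x,y\}$ is an asymptotic pair, a contradiction. Hence $f$ is injective. I do not anticipate a genuine obstacle here: the argument is essentially formal, and the only points needing care are the observations that a genuine orbit is an asymptotic pseudo-orbit and that every fibre of $f$ provides an asymptotic pair, both of which are routine.
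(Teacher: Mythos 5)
Your argument is correct, and it is exactly the routine argument the authors have in mind: the paper omits the proof of this theorem entirely (it is declared to ``come easily'' and is left out), and both of your directions --- an asymptotic pair gives two asymptotic shadows of a genuine orbit, and two asymptotic shadows of one asymptotic pseudo-orbit form an asymptotic pair by the triangle inequality --- are the expected steps, so there is no divergence of approach to report. One remark on the final clause: you silently read ``unique shadowing'' as ``unique limit shadowing''; that is the right reading, since the clause as literally printed is false (the full one-sided shift on two symbols is positively expansive and has shadowing, hence has unique shadowing by Theorem \ref{thmnShadIFF}, yet is not injective), and your derivation of injectivity --- a pair $x\neq y$ with $f(x)=f(y)$ is an asymptotic pair, contradicting the equivalence just established --- is precisely the intended argument for the corrected statement.
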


As with classical shadowing and s-limit shadowing, limit shadowing has a two-sided analogue: A system $(X,f)$ has \emph{(unique) two-sided limit shadowing} if for any two-sided asymptotic pseudo-orbit $(x_i)_{i \in \mathbb{Z}}$ there exists a (unique) two-sided orbit $(z_i)_{i \in \mathbb{Z}}$ which asymptotically shadows it (i.e.\ $d(z_i, x_i) \to 0$ as $i \to \pm\infty$). Two-sided limit shadowing has recently attracted an array of interest (e.g.\ \cite{Carvalho2015, Carvalho2018, CarvalhoKwietniak, Oprocha2014}). Of particular note, is its strength as a condition: it is among the strongest of the pseudo-orbit tracing properties. For homeomorphisms, it has been shown to imply shadowing, mixing and the specification property \cite{CarvalhoKwietniak}. We close this paper by examining how uniqueness modifies two-sided limit shadowing. Since our map is not necessarily a homeomorphism, we first require some additional terminology. Given a continuous self-map $f \colon X \to X$ on a compact metric space  $X$, the set $K_f=\bigcap_{n \in \mathbb{N}} f^n(X)$, which might be termed the {surjective core} of $f$, is a nonempty set on which $f$ is surjective (see, for example \cite{good-greenwood-et-al}). We may then define the \emph{induced core system} $(K_f, f\restriction_{K_f})$, which is easily seen to be a surjective dynamical system. We omit the proof of the following lemma.

\begin{lemma}\label{Lemma_Induced_core_twosidedlimit}
If $(X,f)$ has two-sided limit shadowing then the induced core system has two-sided limit shadowing.
\end{lemma}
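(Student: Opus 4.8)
The plan is to show that a two-sided asymptotic pseudo-orbit in the induced core system $(K_f, f\restriction_{K_f})$ is, in particular, a two-sided asymptotic pseudo-orbit in the ambient system $(X,f)$, use two-sided limit shadowing of $(X,f)$ to shadow it by some two-sided orbit of a point $z \in X$, and then argue that this orbit must actually lie in $K_f$. The first observation is immediate, since $K_f \subseteq X$ and the metric and the map are the same; so the only real content is the final step, namely that the shadowing two-sided orbit is contained in the surjective core.

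First I would recall that $K_f=\bigcap_{n\in\mathbb N} f^n(X)$ and that a point $w$ lies in $K_f$ if and only if for every $n\in\mathbb N$ there is a length-$n$ backwards chain $w=w_0, w_{-1},\ldots, w_{-n}$ in $X$ with $f(w_{-j-1})=w_{-j}$; indeed any such $w$ then has, by a standard compactness/König's-lemma argument, a full backwards orbit in $X$, and conversely. Now suppose $(x_i)_{i\in\mathbb Z}$ is a two-sided asymptotic pseudo-orbit in $K_f$ and $(z_i)_{i\in\mathbb Z}$ is a two-sided orbit of $z=z_0$ in $X$ asymptotically shadowing it. Fix $m\in\mathbb Z$ and $n\in\mathbb N$; I want to exhibit a length-$n$ backwards chain in $X$ ending at $z_m$. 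For $i$ very negative the point $x_i\in K_f$ has a full backwards orbit $(x_i^{(-j)})_{j\geq 0}$ in $X$; pushing it forward by $f^{m-i}$ (using $f(z_k)=z_{k+1}$ and $f(x_k)=x_{k+1}$ only along the genuine orbit segments) gives a backwards chain of any prescribed length $n$ starting from $x_m$, hence a point $x_m^{\flat}\in X$ with $f^n(x_m^{\flat})=x_m$ and $d(x_m^{\flat}, \cdot)$ controlled. Using asymptotic shadowing, $z_m$ is a limit of such $x_m$-type points along a subsequence, and since "being a limit of points admitting length-$n$ backwards chains" is a closed condition (by compactness of $X$ and continuity of $f$), $z_m$ itself admits a length-$n$ backwards chain in $X$. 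As $m$ and $n$ were arbitrary, every $z_m\in K_f$, so $(z_i)_{i\in\mathbb Z}$ is a two-sided orbit in $(K_f, f\restriction_{K_f})$ asymptotically shadowing $(x_i)_{i\in\mathbb Z}$.

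The step I expect to be the main obstacle is making the "$z_m$ admits arbitrarily long backwards chains in $X$" argument precise and uniform in $m$: one must genuinely use the fact that the $x_i$ lie in $K_f$ together with asymptotic shadowing to transfer backwards-chain information from the tail $i\to-\infty$ to the fixed index $m$, and one must verify that the relevant set $\{w\in X : w \text{ has a length-}n\text{ backwards chain in }X\}$ is closed — this is where compactness of $X$ and continuity of $f$ enter, and it is the only nontrivial point. Everything else (that the pseudo-orbit hypothesis and the shadowing estimate descend verbatim to the subsystem, and that $f\restriction_{K_f}$ is well-defined and surjective) is routine. Since the paper explicitly omits this proof, the write-up would consist of little more than the paragraph above.
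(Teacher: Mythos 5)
Your overall plan (restrict the pseudo-orbit, shadow it in $X$, then show the shadowing orbit lies in $K_f$) is the right one, but the step you single out as ``the main obstacle'' is argued incorrectly --- and in fact it needs no argument at all. Two concrete problems with what you wrote: first, pushing a backwards orbit of $x_i$ forward by $f^{m-i}$ does not produce a backwards chain ending at $x_m$, because $(x_i)_{i\in\mathbb Z}$ is only a pseudo-orbit: $f(x_k)\neq x_{k+1}$ in general, there are no ``genuine orbit segments'' among the $x_i$ to exploit, so $f^{m-i}$ applied to that chain need not land anywhere near $x_m$. (And if all you wanted were backwards chains at $x_m$, that is already part of the hypothesis, since $x_m\in K_f$.) Second, the claim that ``$z_m$ is a limit of such $x_m$-type points along a subsequence'' has no basis: asymptotic shadowing only gives $d(z_i,x_i)\to 0$ as $i\to\pm\infty$; it gives no information at the fixed index $m$, and nothing in your construction produces a sequence converging to $z_m$.

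None of that machinery is needed, because the fact you are after is immediate from the definitions: since $(z_i)_{i\in\mathbb Z}$ is a two-sided orbit in $X$, for every $m\in\mathbb Z$ and every $n\in\mathbb N$ we have $z_m=f^n(z_{m-n})$ with $z_{m-n}\in X$, so $z_m\in f^n(X)$ for all $n$, i.e.\ $z_m\in K_f=\bigcap_{n\in\mathbb N}f^n(X)$. Hence $(z_i)_{i\in\mathbb Z}$ is a two-sided orbit of $f\restriction_{K_f}$ and asymptotically shadows $(x_i)_{i\in\mathbb Z}$ in $K_f$ with the restricted metric. Combined with your (correct and routine) observation that a two-sided asymptotic pseudo-orbit in $K_f$ is one in $X$, this finishes the proof in a few lines --- which is presumably why the paper omits it. K\"onig's lemma, compactness, and the closedness of the set of points admitting length-$n$ backwards chains play no role here.
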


Recall that a system $(X,f)$ is \emph{transitive} if for any pair of nonempty open sets $U$ and $V$ there exists $n \in \mathbb{N}$ such that $f^n(U) \cap V \neq \emptyset$. It is \emph{mixing} if for any such pair there exists $N \in \mathbb{N}$ such that $f^n(U) \cap V \neq \emptyset$ for all $n \geq N$. It is well-known that if $f$ is a transitive surjection then the system $(X,f)$ either consists of a single periodic orbit or $X$ contains at least continuum many points (with none being isolated). In similar fashion, it is easily observed that if $f$ is a mixing surjection then the system $(X,f)$ either consists of a single fixed point or $X$ contains at least continuum many points (with none being isolated).

\begin{theorem}\label{Theorem_Shad_mixing_injective_notpositivelynexpansive}
 Let $(X,f)$ be a dynamical system, where $f$ is an injective map with two-sided shadowing. If $f$ is mixing and $X$ contains more than one point then it is not positively $n$-expansive for any $n$.
\end{theorem}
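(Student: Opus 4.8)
The plan is to argue by contradiction: assume $f$ is positively $n$-expansive with constant $r>0$ (fix this $n$ and $r$ throughout) and produce a single point $p\in X$ for which $\Gamma_+(p,r)$ has at least $n+1$ elements, contradicting positive $n$-expansivity. First I would record the reductions forced by mixing. The map $f$ is onto: if $f(X)\neq X$ then $V:=X\setminus f(X)$ is nonempty and open, and mixing applied to $U=X$ gives $m\geq 1$ with $\emptyset\neq f^m(X)\cap V\subseteq f(X)\cap(X\setminus f(X))=\emptyset$. Hence, by Theorem \ref{theorem-shadowing}, $f$ has (ordinary) shadowing; and since $f$ is injective, every iterate $f^L$ is injective. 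Since $|X|>1$ and $f$ is a mixing surjection, $X$ is infinite (indeed perfect), so I may pick distinct points $y_1,\dots,y_{n+1}\in X$; set $\sigma:=\tfrac12\min_{j\neq k}d(y_j,y_k)>0$ and $\epsilon:=\tfrac12\min\{\sigma,r\}$, and let $\delta\in(0,\epsilon)$ be a shadowing constant for $\epsilon$.

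The heart of the construction is to link each $y_j$ to one fixed point by $\delta$-pseudo-orbits of a \emph{common} length. Using topological mixing together with compactness (cover $X$ by finitely many $\delta/4$-balls and take the maximum of the finitely many mixing thresholds), obtain $N\in\mathbb{N}$ such that $f^N\!\big(B(u,\delta/2)\big)\cap B(v,\delta/2)\neq\emptyset$ for every $u,v\in X$. Fix any $p\in X$. For each $j$ choose $c_j$ with $d(c_j,f(y_j))<\delta/2$ and $d(f^N(c_j),p)<\delta/2$; then $y_j,c_j,f(c_j),\dots,f^{N-1}(c_j),p$ is a finite $\delta$-pseudo-orbit of length $L:=N+1$, the same $L$ for every $j$. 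Concatenating this chain with the genuine forward orbit of $p$ gives an infinite $\delta$-pseudo-orbit $x^{j}$ with $x^{j}_0=y_j$ and $x^{j}_{L+m}=f^m(p)$ for all $m\geq 0$. Let $z_j$ be a point that $\epsilon$-shadows $x^{j}$.

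Now I would collect the payoff. Since $d(z_j,y_j)=d(z_j,x^{j}_0)<\epsilon$ and $2\epsilon\leq\sigma$, for $j\neq k$ we get $d(z_j,z_k)\geq d(y_j,y_k)-2\epsilon>2\sigma-\sigma=\sigma>0$, so the $z_j$ are pairwise distinct, and hence (as $f^L$ is injective) so are the points $w_j:=f^{L}(z_j)$. For every $m\geq 0$,
\[
d\big(f^m(w_j),f^m(p)\big)=d\big(f^{L+m}(z_j),x^{j}_{L+m}\big)<\epsilon<r,
\]
so $w_1,\dots,w_{n+1}\in\Gamma_+(p,r)$. Thus $\Gamma_+(p,r)$ has at least $n+1$ points, contradicting positive $n$-expansivity with constant $r$; since $n$ and $r$ were arbitrary, $f$ is not positively $n$-expansive for any $n$.

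The delicate point — and the only place mixing does essential work beyond giving surjectivity and the infiniteness of $X$ — is the second step: mixing is exactly what lets the pseudo-orbits $x^{j}$ be made to agree from a common time $L$ onward while their shadowing points stay anchored near the widely separated starting points $y_j$, so that after applying $f^{L}$ all $n+1$ genuinely different shadowing points collapse into the one stable set $\Gamma_+(p,r)$. Injectivity is what stops $f^{L}$ from undoing that separation, so it cannot be dropped — consistent with the non-injective positively $n$-expansive system of Example \ref{ExampleNexpansive}.
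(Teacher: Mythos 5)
Your proof is correct, and its engine is the same as the paper's: use mixing to build pseudo-orbits that start at $n+1$ well-separated points and then merge into the orbit of a single point, shadow them, and use injectivity of $f$ (hence of $f^L$) to keep the shadowing points distinct after the merge, so that $\Gamma_+(p,r)$ acquires $n+1$ distinct points, contradicting positive $n$-expansivity. Where you differ is in the bookkeeping. The paper applies the two-sided shadowing hypothesis directly, constructing two-sided (in fact asymptotic) $\delta$-pseudo-orbits whose backward halves cycle through the chosen points with errors $2^{j}\delta$; you instead first note that mixing forces surjectivity and then convert two-sided shadowing into ordinary forward shadowing via Theorem \ref{theorem-shadowing}, after which forward pseudo-orbits of a common length $L$ suffice. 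Your compactness/covering argument for a uniform mixing time is more than you need: applying the definition of mixing to the finitely many pairs $\bigl(B(f(y_j),\delta/2),\,B(p,\delta/2)\bigr)$ and taking the maximum threshold already gives a common $N$, which is exactly how the paper obtains its common time $m$. Your route is somewhat more economical, since the backward half of the paper's pseudo-orbits plays no role in the final estimate; the paper's direct two-sided construction avoids the detour through Theorem \ref{theorem-shadowing}. Both arguments are valid and deliver the same conclusion.
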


\begin{proof}
Note first that a transitive system on a compact space is onto, so $f$ is a homeomorphism. Let $n >1$ be given. Since $f$ is mixing and $X$ consists of more than one point, $X$ is infinite. Let $A=\{x_0, \ldots , x_n, z\}$ be a set of $n+2$ distinct points in $X$. Let $\epsilon>0$ be such that the $\epsilon$-balls around points in $A$ are pairwise disjoint and let $\delta>0$ satisfy the shadowing condition for $\epsilon/2$. Without loss of generality $\delta < \epsilon/2$. By mixing, there exists $n_{-1} >1$ and, for each $i \in \{0, \ldots, n\}$, $x_i ^{(-1)} \in B_{ \frac{\delta}{2}}\left(x_i \right)$ such that 
\[ f^{n_{-1}} \left( x_i ^{(-1)} \right) \in B_{ \frac{\delta}{2}}\left(x_{i+1 \pmod{n+1}} \right).\]
Recursively by mixing there exists, for each $j<-1$, $n_{j} >1$ and there exists, for each $i \in \{0, \ldots, n\}$, $x_i ^{(j)} \in B_{ {2^{j}\delta}}\left(x_i \right)$ such that
\[ f^{n_j} \left( x_i ^{(j)} \right) \in B_{ 2^{j}\delta}\left(x_{i+1 \pmod{n+1}} \right).\]
Finally, for each $i \in \{0, \ldots, n\}$ there exists $x^\prime _i \in B_\frac{\delta}{2}(x_i)$ and $m \in \mathbb{N}$ such that $f^m(x^\prime _i) \in B_\delta(z)$. For each $i \in \{0, \ldots, n\}$ we now have a two-sided (asymptotic) $\delta$-pseudo orbit:
\begin{align*}
&\Bigg(  \ldots, x^{(j)} _{i+j \pmod{n+1}}, f\left(x^{(j)} _{i+j \pmod{n+1}}\right), f^2\left(x^{(j)} _{i+j \pmod{n+1}}\right), \ldots,
\\&f^{n_j -1}\left(x^{(j)} _{i+j \pmod{n+1}}\right), x^{(j+1)}_{i+j+1 \pmod{n+1}}, f\left( x^{(j+1)}_{i+j+1 \pmod{n+1}}\right),\ldots , 
\\ &f^{n_{j+1}-1}\left( x^{(j+1)}_{i+j+1 \pmod{n+1}}\right), x^{(j+2)}_{i+j+2 \pmod{n+1}}, \ldots, 
\\&\ldots, x^{(-1)} _{i-1 \pmod{n+1}}, f\left(x^{(-1)} _{i-1 \pmod{n+1}}\right), \ldots, f^{n_{-1}-1}\left(x^{(-1)} _{i-1 \pmod{n+1}}\right),
\\ &x^\prime _i, f\left(x^\prime _i\right), f^2\left(x^\prime _i\right), \ldots, f^{m-1}\left(x^\prime _i\right), z, f(z), f^2(z),\ldots \Bigg)
\end{align*}
%\[(\ldots, x^{(j)} _{i+j \pmod{n+1}}, f(x^{(j)} _{i+j \pmod{n+1}}), f^2(x^{(j)} _{i+j \pmod{n+1}}), \ldots, f^{n_j -1}(x^{(j)} _{i+j \pmod{n+1}}), x^{(j+1)}_{i+j+1 \pmod{n+1}}, f( x^{(j+1)}_{i+j+1 \pmod{n+1}}), f^2( x^{(j+1)}_{i+j+1 \pmod{n+1}}), \ldots , f^{n_{j+1}-1}( x^{(j+1)}_{i+j+1 \pmod{n+1}}), x^{(j+2)}_{i+j+2 \pmod{n+1}}, \ldots, \ldots, x^{(-1)} _{i-1 \pmod{n+1}}, f(x^{(-1)} _{i-1 \pmod{n+1}}), f^2(x^{(-1)} _{i-1 \pmod{n+1}}), \ldots, f^{n_{-1}-1}(x^{(-1)} _{i-1 \pmod{n+1}}), x^\prime _i, f(x^\prime _i), f^2(x^\prime _i), \ldots, f^{m-1}(x^\prime _i), z, f(z), f^2(z), \ldots),\]
where the $0$\textsuperscript{th} term is given by $x^\prime _i$ for each such $i$. Note that these pseudo-orbits are distinct. By two-sided shadowing these pseudo-orbits are $\epsilon/2$-shadowed. Notice that each one is shadowed by a distinct point since, for each distinct pair $i,j \in \{0, \ldots, n\}$, 
\begin{align*}
    d(x_i ^\prime, x_j ^\prime) &>d(x_i, x_j) - \delta
    \\ &> d(x_i, x_j) - \epsilon/2
    \\ &> 3\epsilon/2.
    \end{align*}
Let $y_i \in X$ be a point which $\epsilon / 2$-shadows the pseudo-orbit through $x^\prime _i$ (so that $y_i \in B_\frac{\epsilon}{2}(x^\prime _i)$). Since $f$ is injective $f^k(y_i) \neq f^k(y_j)$ for any $k \in \mathbb{N}$ and distinct $i$ and $j$. It remains now to observe that for each $k \geq m$ and all $i \in \{0, \ldots, n\}$ we have $f^k(y_i) \in B_\frac{\epsilon}{2}(f^{k-m}(z))$. In particular $f^m(y_i) \in \Gamma_+(z, \epsilon)$ for each $i \in \{0, \ldots n\}$ and so $\lvert \Gamma_+(z, \epsilon)\rvert \geq n+1$. Since we could have chosen $\epsilon$ arbitrarily small it follows that $(X,f)$ is not positively $n$-expansive for any $n$. 
\end{proof}

%\begin{remark}
%We observe that the condition of mixing in Theorem \ref{} may be replaced with n-transitivity (i.e.\ the n) and the conclusion would be for leq n).
%\end{remark}

\begin{corollary}
 Let $(X,f)$ be a dynamical system, where $f$ is an injective map with two-sided limit shadowing. If the surjective core contains more than one point then $f$ is not positively $n$-expansive for any $n$.
\end{corollary}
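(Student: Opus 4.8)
The plan is to reduce everything to the induced core system $(K_f, f\restriction_{K_f})$ and then quote Theorem \ref{Theorem_Shad_mixing_injective_notpositivelynexpansive}. First, by Lemma \ref{Lemma_Induced_core_twosidedlimit}, since $(X,f)$ has two-sided limit shadowing, so does $(K_f, f\restriction_{K_f})$. The map $f\restriction_{K_f}$ is a continuous bijection of the compact metric space $K_f$: it is injective because $f$ is, and it is onto by the very definition of the surjective core, so it is a homeomorphism.

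Next I would feed this homeomorphism into the known structure theory. By the result of Carvalho and Kwietniak \cite{CarvalhoKwietniak}, a homeomorphism with two-sided limit shadowing has shadowing and is mixing; by the $(1)\Rightarrow(2)$ direction of Theorem \ref{theorem-shadowing}, shadowing of $f\restriction_{K_f}$ upgrades to two-sided shadowing. Thus $(K_f, f\restriction_{K_f})$ is an injective mixing system with two-sided shadowing, and by hypothesis $K_f$ has more than one point, so all the hypotheses of Theorem \ref{Theorem_Shad_mixing_injective_notpositivelynexpansive} are met. That theorem then yields that $(K_f, f\restriction_{K_f})$ is not positively $n$-expansive for any $n$.

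Finally I would transfer this conclusion back to $(X,f)$ by a trivial restriction argument: positive $n$-expansivity passes to invariant subsets. Indeed, if $(X,f)$ were positively $n$-expansive with constant $r>0$, then for each $x\in K_f$ the set $\{y\in K_f \mid d(f^k(x),f^k(y))<r \text{ for all } k\in\mathbb{N}_0\}$ is contained in $\Gamma_+(x,r)$, hence has at most $n$ points, so $(K_f, f\restriction_{K_f})$ would be positively $n$-expansive with the same constant $r$ — contradicting the previous paragraph. Therefore $(X,f)$ is not positively $n$-expansive for any $n$. The proof is essentially bookkeeping of already-established results; the only point needing care is checking that the cited implications (two-sided limit shadowing $\Rightarrow$ shadowing $\Rightarrow$ two-sided shadowing, and mixing) genuinely apply to $f\restriction_{K_f}$ rather than to $f$ itself, which is why the passage through Lemma \ref{Lemma_Induced_core_twosidedlimit} is the crucial first step.
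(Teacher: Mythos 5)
Your proposal is correct and follows essentially the same route as the paper: pass to the induced core system via Lemma \ref{Lemma_Induced_core_twosidedlimit}, invoke the Carvalho--Kwietniak results to get mixing and (two-sided) shadowing for the core homeomorphism, apply Theorem \ref{Theorem_Shad_mixing_injective_notpositivelynexpansive}, and transfer the failure of positive $n$-expansivity back to $(X,f)$ by restriction. The only cosmetic difference is that you upgrade shadowing to two-sided shadowing through Theorem \ref{theorem-shadowing}, whereas the paper cites the two-sided shadowing conclusion directly; both are valid.
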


\begin{proof}
By Lemma \ref{Lemma_Induced_core_twosidedlimit} the induced core system has two-sided limit shadowing and therefore, by \cite[Theorem B]{CarvalhoKwietniak}, is mixing. By \cite[Theorem A]{CarvalhoKwietniak} the induced core system has two-sided shadowing. Therefore, by Theorem \ref{Theorem_Shad_mixing_injective_notpositivelynexpansive} the induced core system is not positively $n$-expansive for any $n \in \mathbb{N}$. It immediately follows that neither is $(X, f)$.
\end{proof}

\begin{corollary}
An injective map with unique two-sided limit shadowing does not have $n$-shadowing for any $n\in \mathbb{N}$. In particular, it does not have unique shadowing. 
\end{corollary}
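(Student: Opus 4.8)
The plan is to derive the statement as a straightforward consequence of the preceding corollary together with the characterisations already established. Suppose $f$ is an injective map with unique two-sided limit shadowing. Since unique two-sided limit shadowing is in particular two-sided limit shadowing, the hypotheses of the previous corollary are almost in place; the only thing to check is the dichotomy on the surjective core $K_f$. If $K_f$ consists of a single point, then that point is fixed and, since $f$ is injective, the whole space must be a single point (any other point would have all its forward images distinct and eventually trapped near the fixed point of $K_f$, contradicting injectivity together with finiteness of the relevant preimage structure — more simply, an injective map on a compact space with a one-point surjective core forces $X=K_f$). In the degenerate case $|X|=1$ the system trivially has $1$-shadowing, so there is nothing to prove; hence we may assume $|K_f|>1$.

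With $|K_f|>1$, the previous corollary applies directly and tells us that $f$ is not positively $n$-expansive for any $n\in\mathbb N$. By Theorem \ref{thmnShadIFF}(1), having $n$-shadowing is equivalent to having shadowing and being positively $n$-expansive; since the latter fails for every $n$, $f$ has $n$-shadowing for no $n$. In particular, taking $n=1$, $f$ does not have unique shadowing.

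The only real content beyond citing earlier results is the one-point-core reduction, and even that can be sidestepped: if one prefers, simply observe that the statement is vacuous or trivial when $X$ is a single point, and otherwise note that two-sided limit shadowing forces $K_f$ to carry a mixing surjective subsystem (by \cite[Theorem B]{CarvalhoKwietniak} applied via Lemma \ref{Lemma_Induced_core_twosidedlimit}), which on a compact space with more than one point is infinite, so a fortiori $|K_f|>1$ whenever $|X|>1$. I expect no genuine obstacle here — the main point to be careful about is making the degenerate single-point case explicit so that the phrase ``does not have $n$-shadowing for any $n$'' is literally correct, since a one-point system does have unique shadowing; stating the corollary for $X$ with more than one point, or simply noting the triviality, resolves this.
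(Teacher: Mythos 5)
Your main route --- feed the preceding corollary (injective, two-sided limit shadowing, surjective core with more than one point $\Rightarrow$ not positively $n$-expansive for any $n$) into Theorem~\ref{thmnShadIFF}(1) --- is exactly the intended derivation, and your caveat that a one-point system satisfies the hypotheses yet has unique ($1$-)shadowing is a fair remark about the literal statement. The genuine gap is in the case you must eliminate before the preceding corollary applies, namely $|X|>1$ but $|K_f|=1$, and both of your arguments for it fail. The claim that an injective map on a compact space with a one-point surjective core forces $X=K_f$ is false: take $X=\{0\}\cup\{2^{-n}\mid n\in\mathbb{N}_0\}$ and $f(x)=x/2$. Here $f$ is injective, $X$ is infinite, $K_f=\{0\}$, and there is no conflict with injectivity of the kind you gesture at; moreover the constant orbit at $0$ is the only two-sided orbit and every two-sided asymptotic pseudo-orbit converges to $0$ at both ends, so this system even has unique two-sided limit shadowing --- the case genuinely occurs under the hypotheses of the corollary and cannot be declared vacuous. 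The proposed sidestep is a non sequitur for the same reason: Lemma~\ref{Lemma_Induced_core_twosidedlimit} and \cite[Theorem B]{CarvalhoKwietniak} give that the core system is mixing, but a single fixed point is trivially a mixing surjective system, so mixing of the core yields no lower bound on $|K_f|$; ``$|K_f|>1$ whenever $|X|>1$'' is exactly what the example above refutes.

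So the case $|X|>1$, $K_f=\{p\}$ needs an actual argument (the example is consistent with the corollary: it has shadowing but is not positively $n$-expansive for any $n$, hence has no $n$-shadowing). One way to close it: if $K_f=\{p\}$ then every forward orbit converges to $p$ (since $\emptyset\neq\omega(x)\subseteq K_f$), and $X$ is infinite (an injective map of a finite space is a bijection, forcing $K_f=X$). Suppose $f$ were positively $n$-expansive with constant $r$. Choose distinct $x_j\to q$, pick $T_0$ with $d(f^k(q),p)<r/8$ for all $k\ge T_0$, and set $y_j=f^{T_0}(x_j)$: these are distinct by injectivity and converge to $q'=f^{T_0}(q)$, whose forward orbit stays in $B(p,r/8)$. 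By $n$-expansivity all but finitely many $y_j$ lie outside $\Gamma_+(q',r)$, which forces their orbits to exit $B(p,r/2)$, and continuity makes the first exit times $S_j$ tend to infinity; any limit point $z$ of $f^{S_j}(y_j)$ satisfies $d(z,p)\ge r/2$ and, by a diagonal argument on the backward segments $f^{S_j-m}(y_j)\in\overline{B}(p,r/2)$, admits a full backward orbit, hence lies in $K_f=\{p\}$, a contradiction. With some such argument supplied (and the one-point space excluded from the statement), your reduction to the preceding corollary and Theorem~\ref{thmnShadIFF} goes through; as written, the $|K_f|=1$ case is unproved and the two reductions you offer for it are incorrect.
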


\bibliographystyle{plain} % We choose the "plain" reference style
\bibliography{bib}

\end{document}